\crefname{equation}{}{}
\DeclarePairedDelimiter{\ceil}{\lceil}{\rceil}
\def\1{\bm{1}}
\DeclareMathAlphabet{\mathsfit}{\encodingdefault}{\sfdefault}{m}{sl}
\SetMathAlphabet{\mathsfit}{bold}{\encodingdefault}{\sfdefault}{bx}{n}
\newcommand{\R}{\mathbb{R}}
\DeclareMathOperator*{\argmin}{arg\,min}
\newcommand{\defeq}{\coloneq}
\newcommand{\eqdef}{\eqcolon}
\newcommand{\BigO}{\mathcal{O}}
\newcommand{\ClippingIndex}{\mathrm{cl}}
\newcommand{\SimplifiedIndex}{\mathrm{si}}
\DeclareMathOperator{\AGMsDR}{AGMsDR}
\DeclarePairedDelimiterXPP{\PositivePart}[1]{}{\lbrack}{\rbrack}{_+}{#1}
\begin{document}

\PrintTitleAndAbstract

\section{Introduction}

In this paper, we focus on the deterministic unconstrained optimization problem
\begin{equation}
    \label{eq-problem}
    f^* \defeq \min_{x \in \R^d} f(x),
\end{equation}
where $f \colon \R^d \to \R$ is an $(L_0, L_1)$-smooth function.
With the rise of deep learning, ensuring efficient convergence has become increasingly critical. Traditional optimization methods, such as the gradient descent method and its variants, often rely on assumptions like Lipschitz-smoothness to guarantee convergence rates. However, in modern machine learning problems, these assumptions might be too restrictive, especially when optimizing deep neural network models.

Experiments in~\parencite{zhang2019gradient} demonstrated that the Hessian norm
correlates with the gradient norm of the loss when training neural networks.
This observation led the authors to propose $(L_0, L_1)$-smoothness, a more
realistic smoothness assumption that generalizes classical Lipschitz smoothness.
They also analyzed the gradient method (GM) with fixed, normalized, and clipped
stepsizes for nonconvex optimization, showing that normalized and clipped
methods perform more favorably in the new setting. In recent years, numerous
studies have investigated optimization methods under $(L_0, L_1)$-smoothness.
However, despite growing interest, existing convergence results remain
suboptimal in key cases, and the theoretical analysis of these methods is still
incomplete.

To address this gap, this work explores the properties of $(L_0, L_1)$-smooth
functions and investigates gradient methods for their optimization.

\paragraph{Contributions.}

Our main contributions can be summarized as follows:

\begin{itemize}[leftmargin=*,itemsep=1pt,topsep=1pt]
\item In Section~\ref{section-properties}, we provide novel results and insights into
the $(L_0, L_1)$-smooth class by (i) presenting new examples and operations
preserving $(L_0, L_1)$-smoothness, and (ii) deriving new properties of this
function class, leading to tighter bounds on the function value and its
gradient. In Section~\ref{section-gradient}, we propose new, intuitive step
sizes that directly follow from minimizing our tighter upper bound on the
function growth. We also discuss the relation between these stepsizes and those
used in the normalized and clipped gradient methods.
\item For nonconvex functions, our gradient methods achieve the best-known
$\mathcal{O}(\frac{ L_0 F_0}{\epsilon^2} + \frac{ L_1 F_0}{\epsilon})$
complexity bound for finding an $\epsilon$-stationary point, where
$F_0 \defeq f(x_0) - f^*$ is the function residual at the initial point
(\cref{thm-grad-nonconvex}).
For convex problems, we significantly improve existing results by showing that
an $\epsilon$-approximate solution in terms of the function value
can be found in at most
$\mathcal{O}(\frac{L_0 R^2}{\epsilon} + L_1 R \ln \frac{F_0}{\epsilon})$
gradient queries, where $R = \|x_0 -x^*\|$ is the initial distance to a
solution (\cref{thm-grad-convex}).
\item We also study two other methods: normalized gradient method (NGM) and
gradient method with Polyak stepsizes (PS-GM), neither of which requires the
knowledge of $(L_0, L_1)$. For both methods, we show that they enjoy the
$\mathcal{O}\bigl( \frac{L_0 R^2}{\epsilon} + [L_1 R]^2 \bigr) $ complexity
(see \cref{thm-normalized,thm-polyak}).
\item Finally, in Section~\ref{section-accelerated}, we prove the
$\nu \mathcal{O} \bigl( \sqrt{\frac{L_0 R^2 }{\epsilon}} + \ceil{(L_1 R)^{2/3}} \ceil{\ln \frac{F_0}{\epsilon}} \bigr)$
complexity bound for the Accelerated Gradient Method with Small-Dimensional
Relaxation ($\AGMsDR$), where $\nu \geq 1$ denotes the number of oracle queries
required for one-dimensional minimization of the objective over an interval (see
\cref{th:acc-method:improved-complexity-bound}).
\end{itemize}

In contrast to other results in the literature, all our complexity bounds
neither depend on the initial gradient norm nor have an exponential dependency
on $L_0$ or $L_1$.

\paragraph{Related work.}

Following the introduction of the $(L_0, L_1)$-class by \cite{zhang2019gradient}, subsequent works have explored other smoothness generalizations and analyzed gradient methods under these new assumptions.
\cite{DBLP:conf/icml/00020LL23} introduced the $\alpha$-asymmetric class, relaxing the assumption on twice differentiability and allowing a sublinear growth on the norm of a gradient. In \parencite{DBLP:conf/nips/LiQTRJ23}, authors went further and proposed the weakest $(r, l)$-smooth class, which allows even quadratic growth of the norm of the Hessian with respect to the norm of the gradient. Despite the generality of this assumption, there are still some issues and open questions regarding the existing results even for the basic $(L_0, L_1)$-smooth class.

In \parencite{zhang2020improved}, the authors analyzed the clipped GM
with momentum and improved the complexity bound with respect to $(L_0, L_1)$.
Using the right choice of clipping parameters,
\cite{koloskova2023revisiting} proved, for nonconvex and convex problems
respectively, the $\mathcal{O}(\frac{L_0 F_0}{\epsilon^2} + \frac{L_1
F_0}{\epsilon})$ and $\mathcal{O}(\frac{L_0 R}{\epsilon} +
\sqrt{\frac{L}{\epsilon}} L_1 R^2)$ complexity bounds, where $L$ is the standard
Lipschitz-smoothness constant. For convex problems,
\cite{DBLP:conf/nips/LiQTRJ23} proposed an (asymptotically) faster accelerated
gradient method whose complexity is
$ \BigO\bigl((L_1^2 R^2 + \frac{L_1^2 F_0}{L_0} + 1)
        \sqrt{\frac{F_0 + L_0 R^2}{\epsilon}}\bigr)$\footnote{See \cref{sec:complexity-of-nag}.}.
Several works have studied adaptive optimization methods that do not require the
$(L_0, L_1)$ parameters to be known. \cite{faw2023beyond, wang2023convergence}
studied convergence rates for AdaGrad for stochastic nonconvex problems.
\cite{hubler2024parameter} proposed a gradient method with the backtracking line
search and showed the $\mathcal{O}(\frac{L_0 F_0}{\epsilon^2} + \frac{L_1^2
F_0^2}{\epsilon^2})$ complexity bound for nonconvex problems. For convex
problems, \cite{takezawa2024polyak} proved that the PS-GM method enjoys the
complexity of
$\mathcal{O}(\frac{L_0 R}{\epsilon} + \sqrt{\frac{L}{\epsilon}} L_1 R^2)$.

A closely related paper that appeared online independently during the
finalization of our manuscript is \parencite{gorbunov2024methods}.
The authors introduce a new stepsize selection strategy for gradient methods on
convex $(L_0, L_1)$-smooth functions, called ``smooth clipping,'' which, up to
absolute constants, coincides with one of our formulas.
Their proof techniques differ from ours, resulting in a slightly worse
complexity bound of $\BigO(\frac{L_0 R^2}{\epsilon} + [L_1 R]^2)$
compared to our
$\BigO(\frac{L_0 R^2}{\epsilon} + L_1 R \ln \frac{F_0}{\epsilon})$,
particularly when the initial function value is reasonably bounded
(see \cref{section-gradient}).
They also show that PS-GM achieves the same efficiency bound as in our work.
Additionally, the authors present an accelerated method with complexity
$\BigO(1) \exp(\BigO(1) L_1 R)\sqrt{\frac{L_0 R^2}{\epsilon}}$,
and extend their analysis to strongly convex, stochastic and adaptive methods.
In contrast, our work has a slightly different focus, offering deeper insights
by deriving principled stepsize formulas, analyzing nonconvex functions,
studying nornalized gradient methods, and developing a superior acceleration
scheme with significantly better complexity.
Moreover, our proof techniques differ from those in~\parencite{gorbunov2024methods}.

\section{Definition and Properties of $(L_0, L_1)$-Smooth Functions}
\label{section-properties}
In this section, we state our assumptions and discuss important properties of generalized smooth functions. We start with defining our main assumption on $(L_0, L_1)$-smooth functions.

Throughout this paper, unless specified otherwise, we use the standard inner
product $\langle \cdot, \cdot \rangle$ and the standard Euclidean norm
$\| \cdot \|$ for vectors, and the standard spectral norm $\| \cdot \|$ for
matrices.
We also assume that problem~\eqref{eq-problem} admits a solution.

\begin{definition}
\label{asum:smooth}
A twice continuously differentiable function $f \colon \R^d \to \R$ is called \emph{$(L_0, L_1)$-smooth} (for some $L_0, L_1 \geq 0$) if it holds that
\begin{equation}
\label{def-generalized-smooth}
\| \nabla^2 f(x) \| \leq L_0 + L_1 \| \nabla f(x) \|, \qquad \forall x \in \R^d.
\end{equation}
\end{definition}
The class of $(L_0, L_1)$-smooth functions is a wide family which
includes the class of Lipschitz-smooth functions, and was introduced
in~\parencite{zhang2019gradient}.
For twice differentiable functions, this definition is equivalent to that of
$\alpha$-symmetric functions with $\alpha=1$ proposed
in~\parencite{DBLP:conf/icml/00020LL23}.
Since any $\alpha$-symmetric twice differentiable function is also
$(L_0, L_1)$-smooth with a different choice of parameters,
all our subsequent results hold for $\alpha$-symmetric functions as well. Let us present a few simple examples of $(L_0, L_1)$-smooth functions.
\begin{example}
\label{prop-example-1}
The function $f(x) = \frac{1}{p}\|x\|^p$, where $p > 2$, is $(L_0, L_1)$-smooth with arbitrary $L_1 >0$ and $L_0 =(\frac{p - 2}{L_1})^{p - 2}$.
\end{example}
\begin{example}
    \label{example-logistic}
    The function $f(x) = \ln(1 + e^x)$ is $(L_0, L_1)$-smooth with arbitrary $L_1 \in [0,1]$ and $L_0 = \frac{1}{4} (1 - L_1)^2$.
\end{example}
The preceding examples also show that the choice of $L_0, L_1$ parameters is generally not unique. While we cannot guarantee that the class is closed under all standard operations, such as the summation, affine substitution of the argument, we can still show that some operations do preserve $(L_0, L_1)$-smoothness under certain additional assumptions.

\begin{proposition}
    \label{prop-big}
    Let $f \colon \R^d \rightarrow \R$ be a twice continuously differentiable $(L_0, L_1)$-smooth function.
    Then, the following statements hold:
    \begin{enumerate}[topsep=0pt, itemsep=1pt,leftmargin=20pt]
        \item Let $g \colon \R^d \rightarrow \R$ be an $L$-smooth and $M$-Lipschitz twice continuously differentiable function. Then, the sum $f + g$ is $(L_0', L_1')$-smooth with $L_0' = L_0 + M L_1 + L$ and $L_1' = L_1$.
        \item Let $f_i \colon \R^{d_i} \rightarrow \R$ be an $(L_{0, i}, L_{1,i})$-smooth function for each $i=1,\ldots,n$. Then, the function $h \colon \R^{d_1} \times \ldots \times \R^{d_n} \rightarrow \R$ given by  $h(x) = \sum_{i=1}^n f_i(x_i)$, where $x = (x_1, \ldots, x_n)$, is $(L_0, L_1)$-smooth with $L_0 = \max_{1 \leq i \leq n} L_{0,i}$ and $L_1=\max_{1 \leq i \leq n} L_{1,i}$.
        \item If $f$ is univariate ($d = 1$) and $h(x) = f(\langle a, x\rangle + b)$, $x \in \R^d$, where $a \in \R^d$, $b\in\R$, then $h$ is $(L_0', L_1')$-smooth with parameters $L_0' = \|a\|^2 L_0$ and $L_1' = \|a\| L_1 $.
        \item
        Let additionally $\nabla^2 f(x) \succ 0$ for all $x \in \R^d$ and $f$ be $1$-coercive\footnote{This means that $\frac{f(x)}{\| x \|} \to +\infty$ as $\| x \| \to \infty$.}. Then, $f$ is $(L_0, L_1)$-smooth iff its conjugate $f_*$ (which is, under our assumptions, defined on the entire space and also twice continuously differentiable) satisfies $\nabla^2 f_*(s) \succeq \frac{1}{L_0 + L_1 \|s\|} I$ for all $s \in \R^d$, where $I$ is the identity matrix.
    \end{enumerate}
\end{proposition}
One simple example of the additive term~$g$ satisfying the assumptions in the first item of Proposition~\ref{prop-big} is an affine function (for which $L = 0$); another interesting example is the soft-max function $g(x) = \mu \ln(\sum_{i = 1}^m e^{[\langle a_i, x \rangle + b_i] / \mu})$, where $a_i \in \R^d$, $b_i \in \R$, $\mu > 0$. Based on the second statement of \cref{prop-big} and \cref{prop-example-1},
the function $f(x) = \frac{1}{p} \|x\|_p^p \equiv \frac{1}{p} \sum_{i=1}^d |x_i|^p$ with $p > 2$ is $(L_0, L_1)$-smooth with arbitrary $L_1 > 0$ and $L_0 = (\frac{p - 2}{L_1})^{p - 2}$. Using the third statement, we can generalize Example~\ref{example-logistic} and conclude that $f(x) = \ln(1 + e^{\langle a, x \rangle})$ is also $(L_0, L_1)$-smooth with arbitrary $L_1 \in [0, \|a\|]$ and $L_0 = \frac{1}{4} (\| a \| - L_1)^2$.
Also, we can use the last statement of the proposition to show that $f(x) = \frac{L_0}{L_1^2} \phi(L_1 \| x \|) \equiv \frac{L_0}{L_1^2 }(e^{L_1 \|x\|} - L_1 \|x\| - 1)$ is $(L_0, L_1)$-smooth since the Hessian of its conjugate $f_*(s) = \frac{L_0}{L_1^2} \phi_*(\frac{L_1 \| s \|}{L_0}) \equiv \frac{L_0}{L_1^2} [ (1 + \frac{L_1 \|s\|}{L_0}) \ln (1 + \frac{L_1 \|s\|}{L_0}) - \frac{L_1 \|s\|}{L_0}]$ has the form $\nabla^2 f_* (s) = \frac{1}{L_0 + L_1 \|s\|} I$. In particular, we can construct an $(L_0, L_1)$-smooth function by taking any convex function $h_*$, adding to it $\phi_*$ and taking the conjugate (this corresponds to the infimal convolution of $h$ with $\phi$).

For the purpose of analysis of the methods, we provide an alternative and more useful first-order characterization of the class of $(L_0, L_1)$-smooth functions.
\begin{lemma}
\label{lemma:smooth-2}
Let $f$ be a twice continuously differentiable function,
Then, $f$ is $(L_0, L_1)$-smooth if and only if any of the following inequalities holds for any $x, y \in \R^d$:\footnote{Hereinafter, for $L_1 = 0$ and any $t \geq 0$, we assume that $\frac{e^{L_1 t} - 1}{L_1} \equiv t$, $\frac{\phi(L_1 t)}{L_1^2} \equiv \frac{1}{2} t^2$, etc., which are the limits of these expressions when $L_1 \to 0; L_1 > 0$.}
\begin{gather}
    \label{eq-lemma-smooth-2}
    \|\nabla f(y) - \nabla f(x)\| \leq (L_0 + L_1 \|\nabla f(x)\|) \frac{e^{L_1 \| y - x \|} - 1}{L_1},
    \\
    \label{eq-smooth-ineq-1}
    |f(y) - f(x) - \langle \nabla f(x), y - x \rangle| \leq (L_0 + L_1 \|\nabla f(x)\|) \frac{\phi(L_1 \|y - x\|)}{L_1^2},
\end{gather}
where $\phi(t) \defeq e^t - t - 1$ ($t \geq 0$).
\end{lemma}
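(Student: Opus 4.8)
The plan is to prove the cycle of implications (Hessian bound)~\eqref{def-generalized-smooth} $\Rightarrow$ \eqref{eq-lemma-smooth-2} $\Rightarrow$ \eqref{eq-smooth-ineq-1} $\Rightarrow$ \eqref{def-generalized-smooth}, which establishes the equivalence of all three conditions. The whole argument reduces to a one-dimensional problem along the segment joining $x$ and $y$: I would fix $x, y$, set $v = y - x$, and study the curve $g(t) = \nabla f(x + tv)$ for $t \in [0,1]$ together with its norm $r(t) = \|g(t)\|$. Since $f$ is $C^2$, the map $g$ is $C^1$ with $g'(t) = \nabla^2 f(x+tv)\,v$, so the defining inequality~\eqref{def-generalized-smooth} gives the pointwise bound $\|g'(t)\| \le (L_0 + L_1 r(t))\|v\|$.

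For the first implication the key step is a Grönwall-type estimate on $r(t)$. To sidestep the fact that $r(t) = \|g(t)\|$ need not be differentiable where the gradient vanishes, I would work with the integral form: from $g(t) - g(0) = \int_0^t g'(s)\,ds$ and the triangle inequality, $r(t) \le r(0) + \int_0^t (L_0 + L_1 r(s))\|v\|\,ds$. Introducing the differentiable majorant $u(t)$ equal to the right-hand side, one has $r \le u$ and $u' = (L_0 + L_1 r)\|v\| \le (L_0 + L_1 u)\|v\|$; hence $w(t) \defeq L_0 + L_1 u(t)$ satisfies $w' \le L_1\|v\|\,w$, and Grönwall yields $L_0 + L_1 r(t) \le (L_0 + L_1 r(0))\,e^{L_1\|v\| t}$. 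Plugging this into $\|g(1) - g(0)\| \le \int_0^1 \|g'(s)\|\,ds \le \int_0^1 (L_0 + L_1 r(s))\|v\|\,ds$ and evaluating the elementary integral gives exactly~\eqref{eq-lemma-smooth-2}.

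The second implication is pure integration. Writing $f(y) - f(x) - \langle \nabla f(x), y-x\rangle = \int_0^1 \langle g(t) - g(0), v\rangle\,dt$ and applying~\eqref{eq-lemma-smooth-2} with the increment $tv$ (whose norm is $t\|v\|$) bounds the integrand by $\|v\|(L_0 + L_1\|\nabla f(x)\|)\frac{e^{L_1 t\|v\|}-1}{L_1}$; integrating over $t \in [0,1]$ and simplifying with $\phi(a) = e^a - a - 1$ produces the factor $\frac{\phi(L_1\|y-x\|)}{L_1^2}$, which is~\eqref{eq-smooth-ineq-1}. Finally, to close the loop I would recover the Hessian bound from~\eqref{eq-smooth-ineq-1}: taking $y = x + hv$ with $v$ a unit vector, the left-hand side is $\frac{h^2}{2}\langle \nabla^2 f(x)v, v\rangle + o(h^2)$ by Taylor's theorem, while the right-hand side is $\frac{h^2}{2}(L_0 + L_1\|\nabla f(x)\|) + o(h^2)$ since $\phi(L_1 h)/L_1^2 = h^2/2 + O(h^3)$; dividing by $h^2/2$, letting $h \to 0$, and taking the supremum over unit $v$ (using that the symmetric matrix $\nabla^2 f(x)$ has spectral norm $\sup_{\|v\|=1}|\langle \nabla^2 f(x)v, v\rangle|$) gives~\eqref{def-generalized-smooth}.

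I expect the main obstacle to be the rigorous handling of the Grönwall step: the naive differential inequality $r'(t) \le (L_0 + L_1 r(t))\|v\|$ is not literally valid at points where $g(t) = 0$, so the integral-majorant formulation above is what makes the argument clean and avoids Dini-derivative technicalities. A minor but pervasive care point is the $L_1 = 0$ convention from the footnote: each closed-form factor, namely $\frac{e^{L_1 t}-1}{L_1}$ and $\frac{\phi(L_1 t)}{L_1^2}$, must be read as its $L_1 \to 0$ limit, which is harmless here because every step above passes to that limit and recovers the classical Lipschitz-smooth inequalities.
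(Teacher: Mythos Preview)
Your proposal is correct and follows essentially the same route as the paper: the same cycle \eqref{def-generalized-smooth} $\Rightarrow$ \eqref{eq-lemma-smooth-2} $\Rightarrow$ \eqref{eq-smooth-ineq-1} $\Rightarrow$ \eqref{def-generalized-smooth}, with a Gr\"onwall-type argument along the segment for the first step, direct integration for the second, and a second-order Taylor limit for the third. The only cosmetic difference is bookkeeping in the Gr\"onwall step---the paper tracks the integral majorant $\nu(t)=\|h\|\int_0^t(L_0+L_1\|\nabla f(x+\tau h)\|)\,d\tau$ and integrates a log-derivative (adding an $\epsilon$ to avoid division by zero), whereas your majorant $u(t)=r(0)+\nu(t)$ leads to the cleaner inequality $w'\le L_1\|v\|\,w$ for $w=L_0+L_1 u$, which sidesteps that technicality.
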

The proof of \cref{lemma:smooth-2} can be found in
\cref{sec:proof-of-lemma:smooth-2}.
It is worth noting that inequality~\eqref{eq-lemma-smooth-2} is stronger than that from \parencite[Corollary~A.4]{zhang2020improved}.
The bound in inequality~\eqref{eq-smooth-ineq-1} is tighter than those presented in previous works (see, for example, Lemma~A.3 in~\parencite{zhang2020improved}, Lemma~8 in~\parencite{hubler2024parameter}). These tighter estimates allow us to construct gradient methods in the sequel.

In our analysis, we often use certain properties of the function~$\phi$ and its conjugate\footnote{The conjugate function is defined in the standard way: $\phi_*(\gamma) \defeq \max_{t \geq 0} \{ \gamma t  - \phi(t)\}$.}~$\phi_*$, which we summarize in the following lemma (see \cref{sec:proof-of-lemma-phi-properties} for the proof).
\begin{lemma}
\label{lemma-phi-properties}
The following statements for the function $\phi(t) = e^t - t - 1$ hold true:
\begin{enumerate}
    \item
        $\phi(t) \leq \frac{t^2}{2 (1 - \frac{t}{3})}$ for all $t \in [0, 3)$
        and $\phi(t) \leq \frac{t^2}{2} e^t$ for all $t \geq 0$.
    \item $\phi_*(\gamma) \defeq \max_{t \geq 0} \{ \gamma t - \phi(t) \} = (1 + \gamma) \ln (1 + \gamma) - \gamma$ for any $\gamma \geq 0$.
    \item $\frac{\gamma^2}{2 + \gamma} \leq \phi_*(\gamma) \leq \frac{\gamma^2}{2}$ for all $\gamma \geq 0$.
\end{enumerate}
\end{lemma}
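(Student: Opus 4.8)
The plan is to handle the three claims in the order (2), (1), (3), since the closed form for $\phi^*$ from claim~(2) is what makes the bounds in claim~(3) tractable.

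For claim~(2), I would compute the conjugate directly from its definition $\phi^*(\gamma) = \sup_{t}\,(\gamma t - \phi(t))$. Since $\phi''(t) = e^t > 0$, the objective $\gamma t - \phi(t)$ is strictly concave in $t$, so its unique maximizer is found by setting the derivative to zero: $\gamma - (e^t - 1) = 0$, i.e. $t = \ln(1+\gamma)$, which is admissible (nonnegative) precisely for $\gamma \geq 0$. Substituting this value back and simplifying gives $\phi^*(\gamma) = (1+\gamma)\ln(1+\gamma) - \gamma$. This is a short, self-contained computation with no real obstacle.

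For claim~(1), I would argue by comparing power series. Writing $\phi(t) = \sum_{k \geq 2} t^k/k!$ and expanding the right-hand side as a geometric series, $\frac{t^2}{2 - \frac{2}{3}t} = \frac{t^2}{2}\sum_{j \geq 0}(t/3)^j = \sum_{k\geq 2} \frac{t^k}{2\cdot 3^{k-2}}$ (valid for $t\in[0,3)$, where $2 - \frac{2}{3}t > 0$), the claim reduces to the term-by-term inequality $\frac{1}{k!}\leq \frac{1}{2\cdot 3^{k-2}}$, i.e. $2\cdot 3^{k-2}\leq k!$ for every $k\geq 2$. This last inequality holds with equality at $k=2,3$ and propagates by induction, since increasing $k$ multiplies the left side by $3$ and the right side by $k+1\geq 4$.

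Claim~(3) is where the work lies. For the upper bound $\phi^*(\gamma)\leq \gamma^2/2$, the cleanest route is the order-reversing property of conjugation: the power series shows $\phi(t)\geq t^2/2$ for $t\geq 0$, and conjugation reverses the inequality, giving $\phi^*(\gamma)\leq (t^2/2)^* = \gamma^2/2$; alternatively one differentiates $\gamma^2/2 - \phi^*(\gamma)$ and checks it is nondecreasing from $0$. The lower bound $\phi^*(\gamma)\geq \frac{\gamma^2}{2+\gamma}$ is the main obstacle. Here I would set $u(\gamma) = \phi^*(\gamma) - \frac{\gamma^2}{2+\gamma}$, note $u(0)=0$, and show $u'(\gamma)\geq 0$. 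A direct computation simplifies $u'(\gamma)$ to $\ln(1+\gamma) - \bigl(1 - \frac{4}{(2+\gamma)^2}\bigr)$, whose sign is not obvious, so I would iterate the trick: this quantity also vanishes at $\gamma=0$, and its derivative is $\frac{1}{1+\gamma} - \frac{8}{(2+\gamma)^3}$, which is nonnegative iff $(2+\gamma)^3 \geq 8(1+\gamma)$, an inequality that expands to $\gamma(\gamma^2 + 6\gamma + 4)\geq 0$ and is therefore clear for $\gamma\geq 0$. Chaining these monotonicity arguments back up yields $u'\geq 0$ and hence $u\geq 0$. The only delicate point is the two-level ``derivative of a derivative vanishes at $0$'' bookkeeping; everything else is routine polynomial verification.
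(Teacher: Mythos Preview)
Your argument is correct in all three parts; the power-series comparison for (1), the direct conjugate computation for (2), and the two-level monotonicity chain for (3) all go through as you describe. One small remark: in your induction for claim~(1) the step ``$k+1\geq 4$'' holds only for $k\geq 3$, but since you have equality at $k=3$ as a base case this is harmless.

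There is nothing to compare against: the paper states \cref{lemma-phi-properties} without proof, treating these inequalities as standard calculus facts. Your write-up therefore fills a gap the paper leaves open, and the approach you chose (series comparison for~(1), duality plus successive differentiation for~(3)) is clean and self-contained.
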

When $f$ is also convex, we have the following useful inequalities
(see \cref{sec:proof-of-lemma:smooth-5} for the proof).
\begin{lemma}
\label{lemma:smooth-5}
    Let $f$ be a convex $(L_0, L_1)$-smooth nonlinear\footnote{According to \cref{lemma:smooth-2}, this means that $L_0 + L_1 \| \nabla f(x) \| > 0$ for any $x \in \R^d$.} function. Then, for any $x, y\in\R^d$,
\begin{gather}
    \label{eq-smooth-ineq-4}
    f(y)
    \geq
    f(x) + \langle \nabla f(x), y - x \rangle
    +
    \frac{L_0 + L_1 \| \nabla f(y) \|}{L_1^2} \phi_*\Bigl( \frac{L_1 \| \nabla f(y) - \nabla f(x) \|}{L_0 + L_1 \| \nabla f(y) \|} \Bigr),
    \\
    \label{eq-smooth-ineq-5}
    \begin{multlined}
        \langle \nabla f(x) - \nabla f(y), x - y\rangle
        \geq
        \frac{L_0 + L_1 \| \nabla f(y) \|}{L_1^2} \phi_*\Bigl( \frac{L_1 \| \nabla f(y) - \nabla f(x) \|}{L_0 + L_1 \| \nabla f(y) \|} \Bigr)
        \\
        \quad +
        \frac{L_0 + L_1 \| \nabla f(x) \|}{L_1^2} \phi_*\Bigl( \frac{L_1 \| \nabla f(y) - \nabla f(x) \|}{L_0 + L_1 \| \nabla f(x) \|} \Bigr),
    \end{multlined}
\end{gather}
where $\phi_*$ is the function from \cref{lemma-phi-properties}.
\end{lemma}

\Cref{lemma:smooth-5} is a generalization of \parencite[Theorem 2.1.5]{nesterov2018lectures} to
$(L_0, L_1)$-smooth functions, and matches it when $L_1 = 0$
(since $\frac{1}{L_1^2} \phi_{*}(L_1 \alpha) \to \frac{1}{2} \alpha^2$ as $L_1 \to 0$).
Moreover, using \cref{lemma-phi-properties}, we can simplify the lower bound in \cref{eq-smooth-ineq-4}.
\begin{corollary}
Let $f$ be a convex $(L_0, L_1)$-smooth nonlinear function. Then, for any $x, y\in\R^d$,
\begin{equation}
\label{eq:SimpleLowerBoundOnBregmanDistance}
    f(y)
    \geq
    f(x) + \langle \nabla f(x), y - x \rangle
    +
    \frac{\| \nabla f(y) - \nabla f(x) \|^2}{2 (L_0 + L_1 \| \nabla f(y) \|) + L_1 \| \nabla f(y) - \nabla f(x) \|}.
\end{equation}
\end{corollary}

\section{Gradient Method}
\label{section-gradient}
Having established a few important properties of an $(L_0, L_1)$-smooth function~$f$, we now turn our attention to the \emph{gradient method} (GM) for minimizing such a function:
\begin{equation}
\label{eq-gradient}
x_{k+1} = x_k - \eta_k \nabla f(x_k),
\qquad
k \geq 0,
\end{equation}
where $x_0\in\R^d$ is a starting point and $\eta_k \geq 0$ are certain stepsizes.

We start with showing that the gradient update rule~\eqref{eq-gradient} and the
``right'' formula for the stepsize~$\eta_k$ both naturally arise from the
classical idea in optimization theory---choosing the next iterate~$x_{k + 1}$ by
minimizing the global upper bound on the objective constructed
around the current iterate~$x_k$ (see~\parencite{nesterov2018lectures}).
Indeed, let $x \in \R^d$ be the current point, and let $a \defeq L_0 + L_1 \| \nabla f(x) \| > 0$.
According to \cref{eq-smooth-ineq-1}, for any $y \in \R^d$,
\begin{equation*}
    f(y) \leq f(x) + \langle  \nabla f(x), y - x \rangle +  \frac{a}{L_1^2} \phi(L_1 \| y - x \|).
\end{equation*}
Our goal is to minimize the right-hand of the above inequality in~$y$.
Since the last term in this bound depends only on the norm of $y - x$, the optimal point $y^* = T(x)$ is the result of the gradient step $T(x) = x - r^* \frac{\nabla f(x)}{\| \nabla f(x) \|}$ for some $r^* \geq 0$ ensuring the following progress in decreasing the function value:
\begin{equation*}
    f(x) - f(T(x)) \geq \max_{r \geq 0} \Bigl\{ \|\nabla f(x)\| r -  \frac{a}{L_1^2} \phi(L_1 r) \Bigr\} = \frac{a}{L_1^2}\phi_*\Bigl( \frac{L_1 \|\nabla f(x)\|}{a} \Bigr),
\end{equation*}
where $\phi_*$ is the conjugate function to~$\phi$ (see \cref{lemma-phi-properties}).
Furthermore, $r^*$ is exactly the solution of the above optimization problem, satisfying $L_1 \| \nabla f(x) \| = a \phi'(L_1 r^*)$.
Solving this equation, using $(\phi')^{-1}(\gamma) = \phi_*'(\gamma) = \ln(1 + \gamma)$, we obtain $r^* = \frac{1}{L_1} \phi_*'(\frac{L_1 \| \nabla f(x) \|}{a}) = \frac{1}{L_1} \ln(1 + \frac{L_1 \|\nabla f(x)\|}{a})$.

The above considerations lead us to the following \emph{optimal} choice of stepsizes in~\eqref{eq-gradient}:
\begin{align}
\label{eq-grad-step-1}
    \boxed{
        \eta^*_k = \dfrac{1}{L_1 \|\nabla f(x_k)\|} \ln\Bigl( 1 + \frac{L_1 \|\nabla f(x_k)\|}{ L_0 + L_1 \| \nabla f(x_k) \| } \Bigr),
    }
    \qquad k \geq 0,
\end{align}
resulting in the following progress in decreasing the objective:
\begin{equation}
    \label{eq:exact-expression-for-function-progress}
    f(x_k) - f(x_{k + 1}) \geq \frac{L_0 + L_1 \|\nabla f(x_k)\|}{L_1^2}\phi_*\Bigl( \frac{L_1 \|\nabla f(x_k)\|}{L_0 + L_1 \| \nabla f(x_k) \|} \Bigr)
    \defeq \Delta_k.
\end{equation}
The above expression for~$\Delta_k$ is quite cumbersome but, in fact, it behaves as the simple fraction $\frac{\| \nabla f(x_k) \|^2}{L_0 + L_1 \| \nabla f(x_k) \|}$.
More precisely, from \cref{lemma-phi-properties}(3), we see that
\begin{align*}
\frac{ \|\nabla f(x_k)\|^2}{2 L_0 + 3 L_1 \|\nabla f(x_k)\|} \leq \Delta_k \leq \frac{ \|\nabla f(x_k)\|^2}{ 2 (L_0 + L_1 \|\nabla f(x_k)\|)}.
\end{align*}

Thus, there is not much point in keeping the complicated expression~\eqref{eq:exact-expression-for-function-progress} and we can safely simplify it as follows:
\begin{equation}
    \label{eq:simplified-expression-for-function-progress}
    f(x_k) - f(x_{k + 1}) \geq \frac{ \|\nabla f(x_k)\|^2}{2 L_0 + 3 L_1 \|\nabla f(x_k)\|}.
\end{equation}

Interestingly, we can also arrive at exactly the same bound~\eqref{eq:simplified-expression-for-function-progress} by using a simpler choice of stepsizes.
Specifically, replacing $\ln(1 + \gamma)$ with its lower bound $\frac{2 \gamma}{2 + \gamma}$ (which is responsible for the inequality in \cref{lemma-phi-properties}(3) that we used to simplify \cref{eq:exact-expression-for-function-progress} into \cref{eq:simplified-expression-for-function-progress}), we obtain
the following \emph{simplified stepsizes}:
\begin{equation}
\label{eq-grad-step-2}
    \boxed{
        \eta^{\SimplifiedIndex}_k = \frac{1}{L_0 + \frac{3}{2} L_1 \|\nabla f(x_k)\|},
    }
    \qquad k \geq 0.
\end{equation}
With this choice, the iterates of method~\eqref{eq-gradient} still satisfy \cref{eq:simplified-expression-for-function-progress} (see \cref{lem-ineq-appendix}).

Further, note that, up to absolute constants, stepsize~\eqref{eq-grad-step-2} acts as $\frac{1}{\max\{ L_0, L_1 \| \nabla f(x_k) \| \}} = \min\{ \frac{1}{L_0}, \frac{1}{L_1 \| \nabla f(x_k) \|} \}$, which is the so-called clipping stepsize used in many previous works~\parencite{zhang2019gradient,zhang2020improved,koloskova2023revisiting}.
Thus, with the right choice of absolute constants, we can expect the corresponding clipping stepsizes, to satisfy a similar inequality to~\eqref{eq:simplified-expression-for-function-progress}.
This is indeed the case, and we can show, in particular, that the \emph{clipping stepsizes}
\begin{equation}
    \label{eq-clipping}
    \boxed{
        \eta^{\ClippingIndex}_k = \min \Bigl\{ \frac{1}{2 L_0}, \frac{1}{3 L_1 \|\nabla f(x_k)\|} \Bigr\},
    }
    \qquad
    k \geq 0,
\end{equation}
do satisfy~\eqref{eq:simplified-expression-for-function-progress} although with slightly worse absolute constants (see \cref{lem-ineq-appendix}).

We have thus demonstrated in this section that clipping stepsizes~\eqref{eq-clipping} are simply a convenient approximation of the optimal stepsizes~\eqref{eq-grad-step-1}, ensuring a similar bound on the objective progress.
This observation seems to be a new insight into clipping stepsizes which has not been previously explored in the literature.

It is not difficult to see that the three stepsizes we introduced in this section satisfy
\begin{equation}
    \label{eq-stepsizes-increasing-relation}
    \eta_k^{\ClippingIndex}
    \leq
    \eta^{\SimplifiedIndex}_k
    \leq
    \eta_k^*.
\end{equation}
\subsection{Nonconvex Functions}

We are now ready to present a convergence rate result for nonconvex functions.
\begin{theorem}
\label{thm-grad-nonconvex}
Let $f$ be an $(L_0, L_1)$-smooth function, and let $\{x_k\}$ be iterate sequence of
GM~\eqref{eq-gradient} with one of the stepsize choices given by~\eqref{eq-grad-step-1},~\eqref{eq-grad-step-2} or~\eqref{eq-clipping}.
Then, $\min_{0 \leq k \leq K} \|\nabla f(x_k)\| \leq \epsilon$ for any given $\epsilon > 0$ whenever
\begin{equation*}
    K + 1 \geq \frac{2L_0 F_0}{a \epsilon^2} + \frac{3 L_1 F_0}{a \epsilon},
\end{equation*}
where $a = 1$ for stepsizes~\eqref{eq-grad-step-1} and~\eqref{eq-grad-step-2}, and $a=\frac{1}{2}$ for stepsize~\eqref{eq-clipping}.
\end{theorem}
The proof of \cref{thm-grad-nonconvex} can be found in \cref{sec:proof-of-thm-grad-nonconvex}.
The rate in \cref{thm-grad-nonconvex} matches, up to absolute constants, the rate in \parencite{koloskova2023revisiting} for clipped GM
with $\eta = \frac{1}{9}(L_0 + cL_1)$ for $c=\frac{L_0}{L_1}$, or equivalently the GM
with stepsize $\eta_k = \frac{1}{18 L_0} \min \{1, \frac{L_0}{L_1 \|\nabla f(x_k)\|}\}$. Furthermore, our rate is significantly better than the rate $\mathcal{O}(\frac{L_0 F_0}{\epsilon^2} + \frac{L_1^2 F_0 }{L_0})$ obtained in~\parencite{zhang2019gradient} for the clipped GM since $\frac{L_1 F_0}{\epsilon} \leq \frac{L_0^2 F_0}{2 \epsilon} + \frac{ L_1^2 F_0}{2 L_0}$, and the latter expression can be arbitrarily far away from the former whenever $L_0$ is sufficiently small and $L_1$ is distinct from zero.
In addition to that, our convergence rate result does not depend on the gradient norm at the initial point, in contrast to~\cite{DBLP:conf/nips/LiQTRJ23} who consider a wider class of generalized-smooth functions but whose rate (polynomially) depends on $\|\nabla f(x_0)\|$.
Also, our rate from Theorem~\ref{thm-grad-nonconvex}  is better than $\mathcal{O}\bigl(\frac{L_0 F_0}{\epsilon^2} + \frac{L_1^2 F_0^2}{\epsilon^2}\bigr)$ provided in \parencite{hubler2024parameter} for the GM equipped with a certain backtracking line search.

\subsection{Convex Functions}

Let us now provide the convergence rate for convex functions.

\begin{theorem}
    \label{thm-grad-convex}
    Let $\{ x_k \}$ be the iterates of GM~\eqref{eq-gradient} with one of the stepsize choices given in~\eqref{eq-grad-step-1}~\eqref{eq-grad-step-2} or~\eqref{eq-clipping}, as applied to problem~\eqref{eq-problem} with an $(L_0, L_1)$-smooth convex function~$f$.
    Let $x^*$ be an arbitrary solution to the problem and let $F_0 \defeq f(x_0) - f^*$.
    Then, the sequence $R_k \defeq \| x_k - x^* \|$, $k \geq 0$, is nonincreasing, and $f(x_K) - f^* \leq \epsilon$ for any given $0 < \epsilon \leq F_0$ whenever
    \begin{equation*}
        K
        \geq
        \frac{2}{a} \frac{L_0 R^2}{\epsilon} + \frac{3}{a} L_1 R \ln \frac{F_0}{\epsilon}
        \qquad
        \Bigl(
            \leq
            \frac{2 + \frac{3}{e}}{a} \frac{L_0 R^2}{\epsilon} + \frac{3 (1 + \frac{1}{e})}{a} [L_1 R]^2
        \Bigr),
    \end{equation*}
    where $R \defeq R_0$, and $a = 1$ for stepsizes~\eqref{eq-grad-step-1},~\eqref{eq-grad-step-2} and $a = \frac{1}{2}$ for stepsize~\eqref{eq-clipping}.
\end{theorem}
The proof of \cref{thm-grad-convex} can be found in \cref{sec:proof-of-thm-grad-convex}.
Notice, that the second estimate $\mathcal{O}(\frac{L_0 R^2}{\epsilon} + [L_1 R]^2)$ in \cref{thm-grad-convex} comes from a very pessimistic bound on~$F_0$ with the exponentially large quantity $\exp(L_1 R) \frac{L_0 R^2}{2}$ coming from \cref{lemma:smooth-2,lemma-phi-properties}.
However, in the case when $F_0$ is reasonably bounded (e.g., we apply ``hot-start'' or $f$ is a well-behaved function such as the logistic one), the $\BigO(L_1 R \ln \frac{F_0}{\epsilon})$ term from the main estimate can be much smaller than $\BigO([L_1 R]^2)$ from the pessimistic estimate. It is worth mentioning the work of \cite{lobanov2024linear}, posted online after the ICLR rebuttal, where the same bound as in \cref{thm-grad-convex} was independently derived.

In Theorem~\ref{thm-grad-convex}, we do not make an assumption on $L$-smoothness
of the objective, in contrast to \parencite{koloskova2023revisiting}.
Moreover, the rate in the theorem is better than
$\mathcal{O}(\frac{L_0 R^2}{\epsilon} + \sqrt{\frac{L}{\epsilon}} L_1 R^2)$
provided in \parencite{koloskova2023revisiting} for the clipped GM.
Also, in contrast to \parencite{DBLP:conf/nips/LiQTRJ23}, our result does not
include the gradient norm at the initial point which could be quite large
(consider, e.g., $f(x) = \frac{1}{p}\|x\|^{p}$ from \cref{prop-example-1}
for $p >2$ and $x_0$ sufficiently far from the origin).

\section{Normalized Gradient Method}
\label{section-normalized}

To run GM
from \cref{section-gradient}, it is necessary to know the parameters $(L_0, L_1)$ in advance. In many real-life examples, those parameters are unknown, and it might be computationally expensive to estimate them.
Furthermore, for any given function~$f$, the pair $(L_0, L_1)$ is generally not unique (see \cref{prop-example-1,example-logistic}), and it is not clear in advance which pair would result in the best possible convergence rate of our optimization method.
To address this issue, in this section, we present another version of the gradient method that does not require knowing $(L_0, L_1)$.
This is the \emph{normalized gradient method} (NGM):
\begin{equation}
    \label{eq-normalized}
    x_{k+1} = x_k - \frac{\beta_k}{\|\nabla f(x_k)\|}\nabla f(x_k),
    \qquad
    k \geq 0,
\end{equation}
where $x_0 \in \R^d$ is a certain starting point, and $\beta_k$ are positive coefficients.
The following result describes the efficiency of NGM
(see \cref{appendix-normalized} for the proof).
\begin{theorem}
    \label{thm-normalized}
    Let $\{ x_k \}$ be the iterates of NGM~\eqref{eq-normalized}, as applied to problem~\eqref{eq-problem} with an $(L_0, L_1)$-smooth convex function~$f$.
    Consider the constant coefficients $\beta_k = \frac{\hat{R}}{\sqrt{K+1}}$, $0 \leq k \leq K - 1$, where $\hat{R} > 0$ is a parameter and $K \geq 1$ is the total number of iterations of the method (fixed in advance).
    Then, $\min_{0 \leq k \leq K} f(x_k) - f^* \leq \epsilon$ for any given $\epsilon > 0$ whenever

    \begin{equation*}
        K + 1
        \geq
        \max\Bigl\{ \frac{L_0 \bar{R}^2 }{\epsilon}, \frac{4}{9} [L_1 \bar{R}]^2 \Bigr\},
    \end{equation*}
    where $\bar{R} \defeq \frac{1}{2} (\frac{R^2}{\hat{R}} + \hat{R})$,  $R \defeq \| x_0 - x^* \|$, and $x^*$ is an arbitrary solution of the problem.
\end{theorem}
The parameter $\hat{R}$ in the formula for coefficients~$\beta_k$ is an estimation of the initial distance~$R$ to a solution, and the best complexity bound of $K^* \defeq \BigO(\frac{L_0 R^2}{\epsilon} + [L_1 R]^2)$ is achieved whenever $\hat{R} = R$. Note that, even if $\hat{R} \neq R$, the method still converges but with a slightly worse total complexity of $K^* \rho^2$, where $\rho = \max \{\frac{R}{\hat{R}}, \frac{\hat{R}}{R}\}$.

The proof of \cref{thm-normalized} is based on the following two important facts~\cite[Section~3]{nesterov2018lectures}.
First, under the proper choice of coefficients~$\beta_k$, NGM
ensures that the minimal value~$v_K^*$ among $v_k \defeq \frac{\langle \nabla f(x_k), x_k - x^* \rangle}{\| \nabla f(x_k) \|}$, $0 \leq k \leq K$, converges to zero at the rate of $\frac{\bar{R}}{\sqrt{K}}$.
These quantities $v_k$ have a geometrical meaning---each of them is exactly the distance from the point~$x^*$ to the supporting hyperplane to the sublevel set of~$f$ at the point~$x_k$.
Second, whenever $v_K^*$ converges to zero, so does $\min_{0 \leq k \leq K} f(x_k) - f^*$.
Moreover, we can relate the two quantities whenever we can bound, for any given $v \geq 0$, the function residual $f(x) - f^*$ over the ball $\| x - x^* \| \leq v$:
\begin{lemma}[{\cite[Lemma 3.2.1]{nesterov2018lectures}}]
\label{lemma-nesterov}
Let $f \colon \R^d \to \R$ be a differentiable convex function. Then, for any $x, y \in \R^d$ and\footnote{Here $[t]_+ \defeq \max\{ t, 0 \}$ is the nonnegative part of $t \in \R$.} $v_f(x; y) \defeq \frac{ [\langle \nabla f(x), x - y\rangle]_+}{\| \nabla f(x)\| }$, it holds that
\begin{align}
f(x) - f(y) \leq \max_{z \in \R^d} \{ f(z) - f(y) : \|z -y\| \leq v_f(x; y) \}.
\end{align}
\end{lemma}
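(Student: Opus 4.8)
The plan is to reduce the statement to the construction of a single witness point. Note that the inequality follows as soon as I can produce some $z$ with $\|z - y\| \le v_f(x; y)$ and $f(z) \ge f(x)$, because then $f(x) - f(y) \le f(z) - f(y) \le \max\{ f(z') - f(y) : \|z' - y\| \le v_f(x;y)\}$, which is exactly the claim. So the entire proof boils down to exhibiting such a $z$, and the geometric reading of $v_f(x;y)$ as the distance from $y$ to the supporting hyperplane $\{u : \langle \nabla f(x), u - x\rangle = 0\}$ of the sublevel set of $f$ at $x$ tells me where to look.

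First I would treat the main case, $\langle \nabla f(x), x - y\rangle \ge 0$ with $\nabla f(x) \neq 0$, and take the point on the sphere of radius $v_f(x;y)$ around $y$ pointing along the gradient, namely $z \defeq y + v_f(x; y)\, \frac{\nabla f(x)}{\|\nabla f(x)\|}$. By construction $\|z - y\| = v_f(x;y)$, so $z$ is feasible. The key computation is $\langle \nabla f(x), z - x\rangle = \langle \nabla f(x), y - x\rangle + v_f(x;y)\,\|\nabla f(x)\|$; here the definition of $v_f$ gives $v_f(x;y)\,\|\nabla f(x)\| = \langle \nabla f(x), x - y\rangle$, so the two terms cancel and $\langle \nabla f(x), z - x\rangle = 0$. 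The convexity gradient inequality $f(z) \ge f(x) + \langle \nabla f(x), z - x\rangle$ then yields $f(z) \ge f(x)$, finishing this case.

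Next I would handle the remaining case $\langle \nabla f(x), x - y\rangle < 0$, in which $v_f(x;y) = 0$ and the feasible ball collapses to $\{y\}$, so the right-hand side equals $f(y) - f(y) = 0$. Here a direct argument suffices: convexity gives $f(y) \ge f(x) + \langle \nabla f(x), y - x\rangle = f(x) - \langle \nabla f(x), x - y\rangle > f(x)$, hence $f(x) - f(y) < 0$ and the inequality holds. The degenerate case $\nabla f(x) = 0$ is even easier, since then $x$ is a global minimizer, so $f(x) \le f(y)$ directly (and $v_f$ is taken to be $0$ by convention).

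I expect no serious obstacle: this is a short argument once the witness $z$ is identified. The only point requiring care is the case split forced by the $[\,\cdot\,]_+$ in the definition of $v_f$, and making sure the geometric intuition (moving from $y$ toward the supporting hyperplane) is turned into the correct algebraic cancellation $\langle \nabla f(x), z - x\rangle = 0$; everything else is the standard first-order characterization of convexity.
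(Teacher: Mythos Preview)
Your argument is correct and is essentially the standard proof from \cite[Lemma~3.2.1]{nesterov2018lectures}, which the paper merely cites without reproducing. The witness point $z = y + v_f(x;y)\,\nabla f(x)/\|\nabla f(x)\|$, the cancellation $\langle \nabla f(x), z - x\rangle = 0$, and the case split on the sign of $\langle \nabla f(x), x - y\rangle$ are exactly the ingredients of Nesterov's original proof, so there is nothing to compare.
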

In our case---when the function~$f$ is $(L_0, L_1)$-smooth---the corresponding bound can be obtained from~\cref{lemma:smooth-2}.

In \cref{thm-normalized}, we fix the number of iterations $K$ before running the method, which is a standard approach for the (normalized)-(sub)gradient methods (Section~3.2 in~\cite{nesterov2018lectures}).
However, doing so may be undesirable in practice since it becomes difficult to continue running the method if the time budget was suddenly increased and also prevents the method from using larger stepsizes at the initial iterations.
To overcome these drawbacks, one can use time-varying coefficients by setting $\beta_k = \frac{\hat{R}}{\sqrt{k+1}}$, $0 \leq k \leq K - 1$.
This results in the same worst-case theoretical complexity as in \cref{thm-normalized} but with an extra logarithmic factor (see \cref{appendix-thm-normalized-decreasing}).
Moreover, one can completely eliminate this extra logarithmic factor by switching to an appropriate modification of the standard (sub)gradient method such as Dual Averaging~\parencite{Nesterov2005PrimaldualSM}.

For $\hat{R} = R$, the complexity of NGM is $\mathcal{O}(\frac{L_0 R^2}{\epsilon} + [L_1 R]^2)$ which is generally worse than that of the previously considered GM (see \cref{thm-grad-convex} and the corresponding discussion). However, recall that GM requires knowing $(L_0, L_1)$, and its rate depends on the particular choice of these constants.
In contrast, NGM does not require the knowledge of these parameters, and its
``real'' complexity is
\[
\BigO(1) \min_{L_0, L_1}
\Bigl\{ \frac{L_0 \bar{R}^2 }{\epsilon} + [L_1 \bar{R}]^2 \colon \text{$f$ is $(L_0, L_1)$-smooth} \Bigr\},
\]
where $\BigO(1)$ is an absolute constant.

\section{Gradient Method with Polyak Stepsizes}
\label{section-polyak}
In the previous sections, the parameters required to run the methods were
$(L_0, L_1)$ for GM, and the estimation $\hat{R}$ of the initial distance to a
solution $R$ for NGM. To achieve good complexity for NGM, the estimate $\hat{R}$
should be close to the real $R$, otherwise the algorithm will be inefficient.
Sometimes, $(L_0, L_1)$, or a good estimate $\hat{R}$ are unknown,
while the optimal value of the objective is available.
One such example is overparametrized models in machine learning where $f^* = 0$.

In this section, we focus on the case when $f^*$ is known and analyze the
gradient method~\eqref{eq-gradient} with the Polyak stepsizes (PS-GM):
\begin{equation}
    \label{eq-polyak-step}
    \eta_k = \frac{f(x_k) - f^*}{\|\nabla f(x_k)\|^2},
    \qquad
    k \geq 0.
\end{equation}
\begin{theorem}
    \label{thm-polyak}
    Let $\{ x_k \}$ be the iterates of PS-GM~\eqref{eq-gradient},~\eqref{eq-polyak-step}, as applied to problem~\eqref{eq-problem} with an $(L_0, L_1)$-smooth convex function~$f$.
    Then, it holds that $\min_{0 \leq k \leq K} f(x_k) - f^* \leq \epsilon$ for any given $\epsilon > 0$ whenever
    \begin{equation*}
        K + 1 \geq \max \Bigl\{ \frac{4 L_0 R^2}{\epsilon}, [6 L_1 R]^2 \Bigr\},
    \end{equation*}
    where $R \defeq \| x_0 - x^* \|$ and $x^*$ is an arbitrary solution of the problem.
\end{theorem}
We prove the theorem by using a standard inequality for the gradient method with Polyak stepsizes (PS-GM) for convex functions,
\begin{equation*}
    R_k^2 - R_{k+1}^2 \geq \frac{f_k^2}{g_k^2},
\end{equation*}
where $R_{k}= \|x_k - x^*\|$, $f_k = f(x_k) -f^*$, and $g_k = \|\nabla
f(x_k)\|$. We then leverage the lower
bound~\eqref{eq:SimpleLowerBoundOnBregmanDistance}, and bound the gradient
norm~$g_k$ by $\psi^{-1}(f_k)$, where
$\psi(g) \defeq \frac{g^2}{2 L_0 + 3 L_1 g}$, obtaining
\begin{equation*}
    R_k^2 - R_{k+1}^2 \geq \frac{f_k^2}{[\psi^{-1}(f_k)]^2}.
\end{equation*}
Summing up these relations, passing to the minimal value of~$f_k$, and rearranging the resulting inequality, we obtain the desired bound. The complete proof of \cref{thm-polyak} can be found \cref{sec:proof-of-thm-polyak}.

Notice that the rate $\mathcal{O}(\frac{L_0 R^2}{\epsilon} + [L_1 R]^2)$ in
\cref{thm-polyak} is the same as that of NGM from \cref{thm-normalized}.
Further, our rate is better than $\mathcal{O}(\frac{L_0 R^2}{\epsilon} +
\sqrt{\frac{L}{\epsilon}} L_1 R^2)$ provided in~\parencite{takezawa2024polyak},
and does not require any extra assumptions such as the $L$-Lipschitz
smoothness of the objective.
Finally, as for NGM, the rate for PS-GM holds for any choice of $(L_0, L_1)$,
including the best possible one.

\section{Accelerated Gradient Method}
\label{section-accelerated}

This section develops an accelerated method for minimizing an
$(L_0, L_1)$-smooth convex function~$f$.
The key ingredient of our analysis is a monotone variant of the accelerated
gradient scheme known as the Accelerated Gradient Method with Small-Dimensional
Relaxation ($\AGMsDR$)~\parencite{nesterov2021primal}.
\begin{algorithm}[tb]
    \caption{$\AGMsDR$}
    \label{algo:AGMsDR}
	\begin{algorithmic}[1]
        \State \textbf{Input:} Initial point $x_0 \in \R^d$, update rule $T(\cdot)$.
        \State $v_0 = x_0$, $A_0 = 0$, $\zeta_0(x) = \frac{1}{2} \|x - v_0\|^2$.
		\For {$k = 0, 1, \ldots$}
            \State $y_k = \argmin_y \{ f(y) \colon y = v_k + \beta(x_k - v_k), \beta \in [0, 1] \}$.
            \State $x_{k+1} = T(y_{k})$, \ $M_k = \frac{\|\nabla f(y_k)\|^2}{2 [f(y_k) - f(x_{k+1})]}$ ($> 0$).\footnotemark
            \State Find $a_{k+1} > 0$ from the equation $M_{k}  a_{k+1}^2 = A_k + a_{k+1}$.
                   Set $A_{k+1} = A_k + a_{k+1}$.
            \State $v_{k+1} = \argmin_{x \in \R^d} \{ \zeta_{k+1}(x) := \zeta_k(x) + a_{k+1} [f(y_k) + \langle \nabla f(y_k), x - y_k \rangle] \}$.
		\EndFor
	\end{algorithmic}
\end{algorithm}
\footnotetext{%
  For the sake of simplicity, in what follows, we always assume that, at each
  iteration, $\nabla f(y_k) \neq 0$.
  Otherwise, $y_k$ is an optimal point, and we can stop the method.
  Note that, in view of \eqref{eq:fast-method-sufficient-decrease-property},
  the denominator in the definition of~$M_k$ is strictly positive.
}%
We present this method in \cref{algo:AGMsDR} in a slightly more general form
than the original work.
Specifically, instead of computing~$x_{k + 1}$ via a standard gradient step
from~$y_k$, we allow any update rule $T(\cdot) \colon \R^d \to \R^d$ that
ensures a strictly positive decrease in the function value:
\begin{equation}
    \label{eq:fast-method-sufficient-decrease-property}
    f(x) - f(T(x)) > 0,
    \qquad
    \forall x \in \R^d \setminus \{x \colon f(x) = f^*\}.
\end{equation}

\begin{theorem}
    \label{thm-accelerated-AGMsDR}
    Let AGMsDR (\cref{algo:AGMsDR}) be applied to problem~\eqref{eq-problem}
    with a differentiable convex objective~$f$, and any update rule~$T(\cdot)$
    satisfying the strictly positive decrease
    property~\eqref{eq:fast-method-sufficient-decrease-property}.
    Let $x^*$ be an arbitrary solution of the problem, and
    let $R \defeq \|x_0 - x^*\|$.
    Then, for all $k \geq 0$, we have
    \begin{equation}
      \label{eq:thm-accelerated-AGMsDR:main-bound}
      f(x_{k + 1}) - f^*
      \leq
      \frac{2 R^2}{\bigl( \sum_{i=0}^k \frac{1}{\sqrt{M_i}} \bigr)^2},
      \qquad
      f(x_{k + 1}) + \frac{1}{2 M_k} \| \nabla f(y_k) \|^2 = f(y_k) \leq f(x_k).
    \end{equation}
\end{theorem}

The proof of \cref{thm-accelerated-AGMsDR} is given in
\cref{sec:proof-of-thm-accelerated-AGMsDR}.
Interestingly, neither the result nor the algorithm assumes any specific
smoothness properties of the objective function.
However, the convergence rate depends on the magnitude of the quantities
$M_i \equiv \frac{\| \nabla f(y_i) \|^2}{2 [f(y_i) - f(x_{i + 1})]}$,
which quantifies the progress made by each step~$T(\cdot)$.
For standard $L$-Lipschitz smooth functions, a natural choice of
$T(\cdot)$ is a gradient step with stepsize~$\frac{1}{L}$,
yielding $M_i \leq L$ and the well-known rate
$\BigO(\frac{L R^2}{k^2})$ for $f(x_k) - f^*$.

For $(L_0, L_1)$-smooth functions, we define $x_{k + 1} = T(y_k)$ as a gradient
step with any of the stepsize rules discussed in \cref{section-gradient}
(applied to $y_k$ rather than $x_k$), ensuring the following progress per step:
\begin{equation}
    \label{eq:acc-method:progress-at-each-step}
    f(y_k) - f(x_{k + 1})
    \geq
    \frac{g_k^2}{L_0' + L_1' g_k},
\end{equation}
where $g_k \defeq \| \nabla f(y_k) \|$, $L_0' \defeq \frac{2}{a} L_0$, and
$L_1' \defeq \frac{3}{a} L_1$, with $a$ being an absolute constant depending on
the specific stepsize rule.
This implies $M_k \leq \frac{1}{2} (L_0' + L_1' g_k)$, leading to the following
convergence rate estimate:
\begin{equation}
  \label{eq:acc-method:convergence-rate-via-gradient-norms}
  f(x_{k + 1}) - f^*
  \leq
  \frac{(2 R)^2}{\bigl(\sum_{i = 0}^k \frac{1}{\sqrt{L_0' + L_1' g_i}}\bigr)^2}.
\end{equation}
To obtain an explicit complexity bound
from~\eqref{eq:acc-method:convergence-rate-via-gradient-norms}, we must show
that the gradient norms $g_i$ do not grow too quickly on average.
This follows from \cref{eq:acc-method:progress-at-each-step} and
the algorithm's construction ensuring that $f(y_k) \leq f(x_k)$.
Ultimately, this yields the following complexity result whose
proof is given in \cref{sec:proof-of-improved-complexity-bound}.

\begin{theorem}
  \label{th:acc-method:improved-complexity-bound}
  Let AGMsDR (\cref{algo:AGMsDR}) be applied to solving
  problem~\eqref{eq-problem} with an $(L_0, L_1)$-smooth convex objective,
  and $T(\cdot)$ being the gradient update $T(x) = x - \eta_x \nabla f(x)$,
  where $\eta_x$ is any of the stepsizes~\eqref{eq-grad-step-1},
  \eqref{eq-grad-step-2}, or~\eqref{eq-clipping}
  (with $x_k$ replaced by~$x$, respectively).
  Further, let $x^*$ be an arbitrary solution to the problem, and define
  $F_0 \defeq f(x_0) - f^*$ and $R \defeq \| x_0 - x^* \|$.
  Then, $f(x_k) - f^* \leq \epsilon$ for a given $0 < \epsilon \leq F_0$
  whenever
  \[
    k
    \geq
    \sqrt{\frac{48 L_0 R^2}{a \epsilon}}
    +
    \ceil[\big]{3 (\tfrac{2}{a} L_1 R)^{2 / 3}}
    \ceil[\Big]{\log_2 \frac{2 F_0}{\epsilon}},
  \]
  where $a = 1$ for stepsize
  rules~\eqref{eq-grad-step-1},~\eqref{eq-grad-step-2},
  and $a = \frac{1}{2}$ for stepsize rule~\eqref{eq-clipping}.
  The total number of first-order oracle queries required to construct~$x_k$ is
  at most $(\nu + 1) k$, where $\nu$ is the number of oracle queries needed to
  compute~$y_k$ at each iteration.
\end{theorem}

Compared to existing complexity results for accelerated gradient methods on
$(L_0, L_1)$-smooth functions---such as the
$\BigO\bigl( (L_1^2 R^2 + \frac{L_1^2 F_0}{L_0} + 1) \sqrt{\frac{F_0 + L_0 R^2}{\epsilon}} \bigr)$
bound for NAG~\parencite{DBLP:conf/nips/LiQTRJ23} (see \cref{sec:complexity-of-nag}),
and the $\BigO(1) \exp(\mathcal{O}(1) L_1 R)\sqrt{\frac{L_0 R^2}{\epsilon}}$
bound for STM-Max~\parencite{gorbunov2024methods}---
our complexity estimate in \cref{th:acc-method:improved-complexity-bound}
is significantly better.

At each step, AGMsDR requires solving a certain one-dimensional subproblem to
compute~$y_k$, which we assume requires at most $\nu$ oracle queries.
For many practical problems, this subproblem is computationally efficient,
making the extra factor~$\nu$ in the complexity estimate negligible.
Nevertheless, from a theoretical perspective, eliminating this one-dimensional
search (as in the standard FGM for Lipschitz-smooth functions) remains an
important open question for future research.

\section{Numerical Results}
In \cref{fig:experiment-1}, we compare the performance of the analyzed methods for solving optimization problem~\eqref{eq-problem} with a function $f(x) = \frac{1}{p} \|x\|^p$.
We fix $L_1 = 1$ and choose $L_0 = (\frac{p-2}{L_1})^{p-2}$ according to \cref{prop-example-1}. For GM,
we choose stepsizes according to~\cref{eq-grad-step-1,eq-grad-step-2,eq-clipping}. For NGM,
we use time-varying coefficients $\beta_k = \frac{\hat{R}}{k+1}$ with different values of $\hat{R} \in \{\frac{1}{2} R, 2 R, 10 R \}$, which allows us to study the robustness of this method to our initial guess of the unknown initial distance to the solution.
Note that, for this particular problem, the choice of~$\hat{R} = R$ is rather special and allows the method to find the exact solution after one iteration, so we are not considering it.
We observe that, NGM and PS-GM outperform GM
with stepsizes from~\cref{eq-grad-step-1,eq-grad-step-2,eq-clipping}. This can be explained by the fact that the complexity of GM depends on the particular choice of $(L_0, L_1)$, while complexity of NGM and PS-GM involves the optimal parameters $L_0, L_1$ as discussed in Section~\ref{section-normalized}.
Moreover, closer initial distance estimation $\hat{R}$ to a true value $R$ leads to a faster convergence of NGM to a solution.

In \cref{fig:experiment-2}, we present an experiment studying the performance of the GM
with the stepsize rule~\cref{eq-grad-step-1} based on the choice of $(L_0, L_1)$. For each choice of $L_1 \in \{1, 2, 4, 8, 16\}$ we set $L_0 = (\frac{p-2}{L_1})^{p-2}$, according to Example~\ref{prop-example-1}. As expected from the theory (see the corresponding discussion at the end of \cref{section-normalized}), the choice of $(L_0, L_1)$ pair is crucial in practice for the performance of GM
and depends on a target accuracy $\epsilon$.

In \cref{fig:experiment-3}, we conduct an experiment for
accelerated methods and consider GM with stepsize~\eqref{eq-grad-step-1},
\cref{algo:AGMsDR} with $T(\cdot)$ being the gradient update with
stepsize~\eqref{eq-grad-step-1}, and two variants of normalized Similar
Triangles Methods (STM, and STM-Max) from~\cite{gorbunov2024methods}. STM uses
normalization by the norm of the gradient at the current point in a gradient
step, while STM-Max normalizes by the largest norm of the gradient over the
optimization trajectory. It is worth noticing that only STM-Max has theoretical
convergence guarantees.
We set $L_1 = 1$, $L_0 = (\frac{p-2}{L_1})^{p-2}$ (see \cref{prop-example-1})
with various~$p$. We observe that for smaller values of $p$,
\cref{algo:AGMsDR} outperforms STM and STM-max.

\begin{figure}[bt]
\vspace{-4mm}
\centering
\subfigure[$p=4$]{
\includegraphics[width=0.31\textwidth]{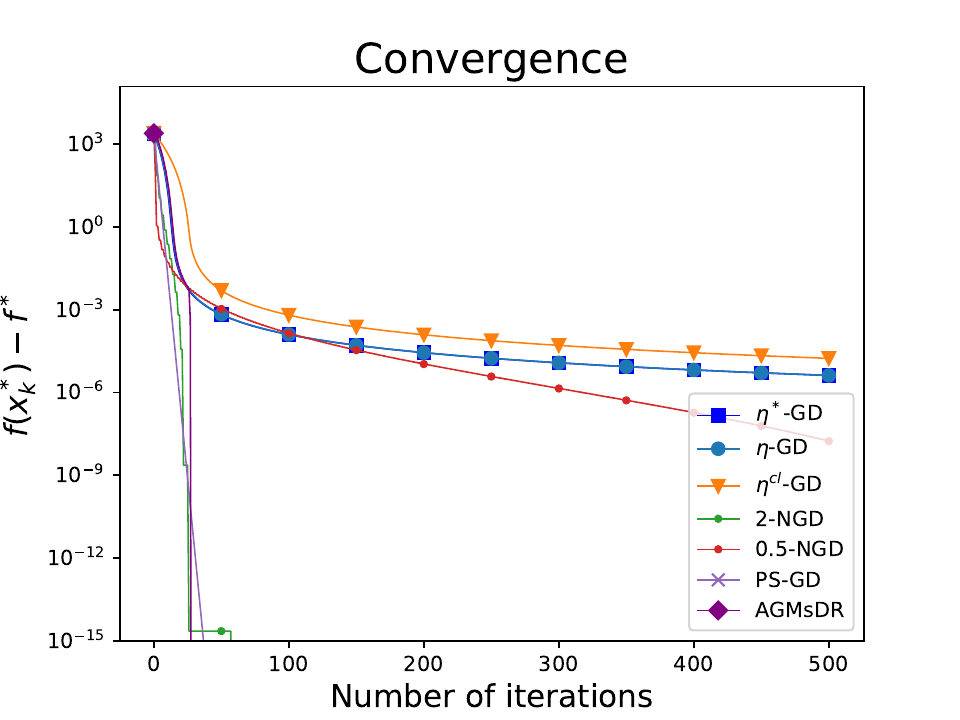}
}
\subfigure[$p=6$]{
\includegraphics[width=0.31\textwidth]{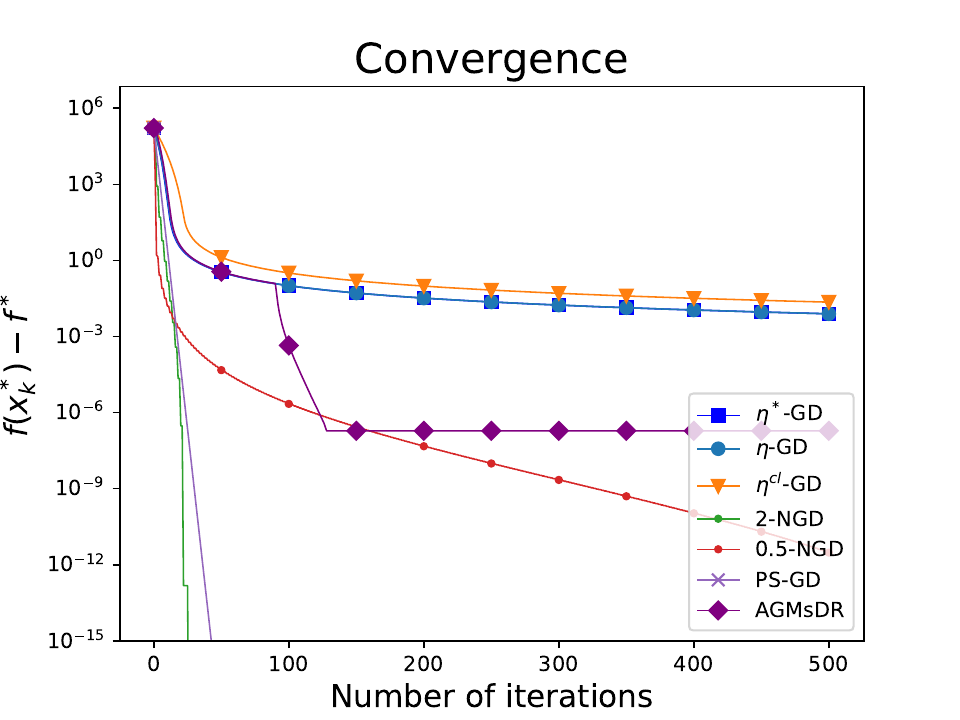}
}
\subfigure[$p=8$]{
\includegraphics[width=0.31\textwidth]{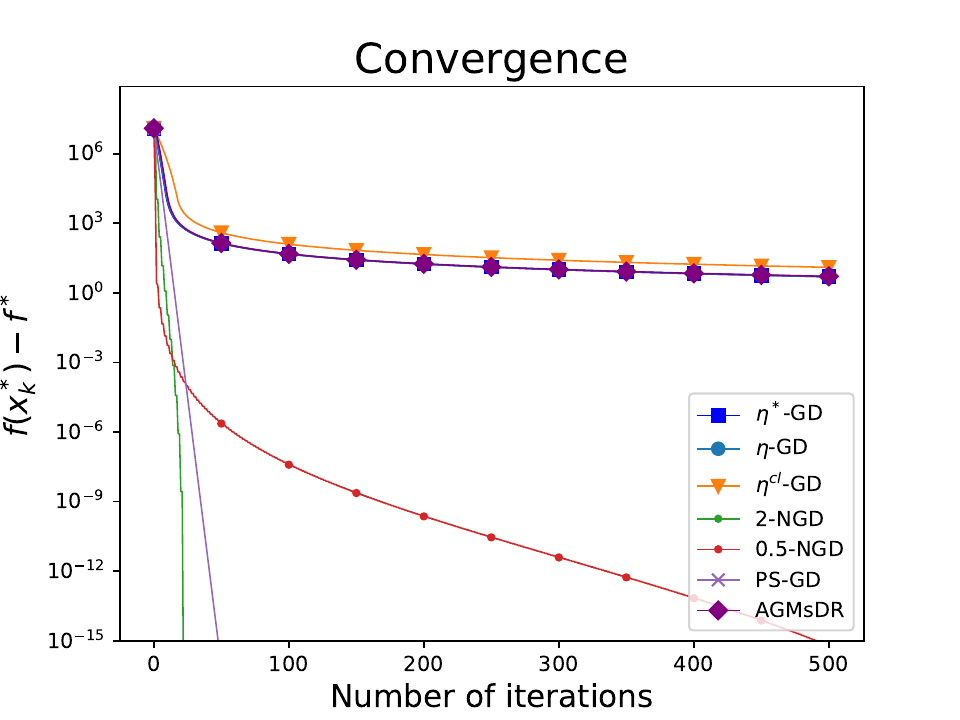}
}
\vspace{-1em}
\caption{Comparison of gradient methods for $f(x) = \frac{1}{p} \|x\|^p$. $\frac{\hat{R}}{R}$-NGD stands for Normalized Gradient Method, where $\hat{R}$ is an estimation of the true initial distance to a solution $R$.  $\eta_*$-GD, $\eta^{\SimplifiedIndex}$-GD,  $\eta^{\ClippingIndex}$-GD stand for gradient method with stepsizes~\cref{eq-grad-step-1,eq-grad-step-2,eq-clipping} respectively, PS-GD stands for Polyak stepsizes gradient method, and AGMsDR stands for Algorithm~\ref{algo:AGMsDR}.
}
\label{fig:experiment-1}
\end{figure}
\begin{figure}[bt]
\vspace{-1.5em}
\centering
\subfigure[$p=4$]{
\includegraphics[width=.31\textwidth]{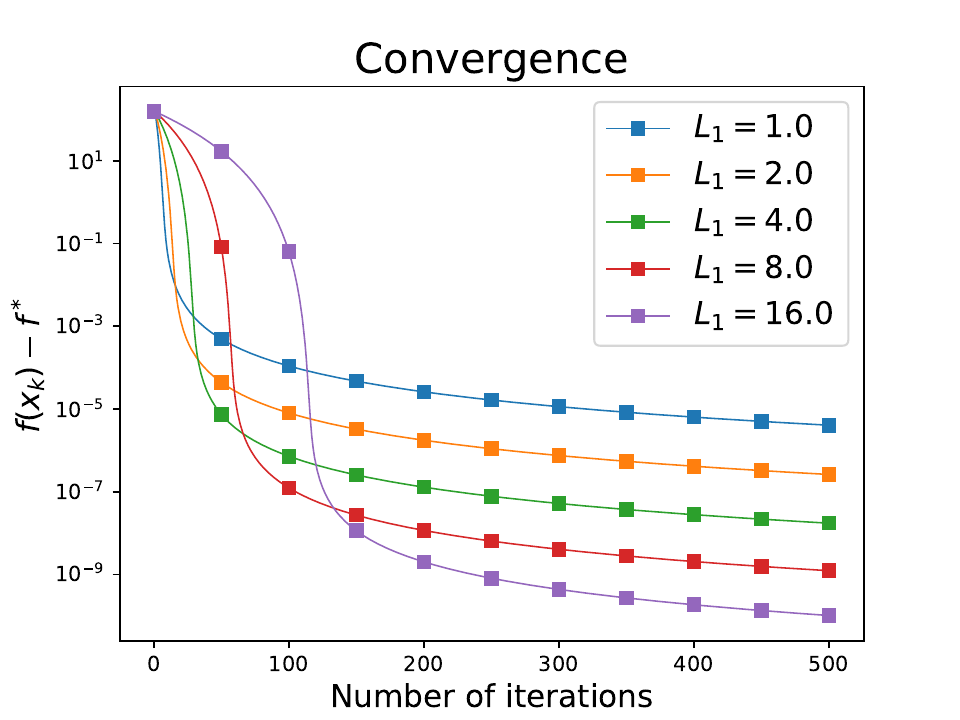}
}
\subfigure[$p=6$]{
\includegraphics[width=.31\textwidth]{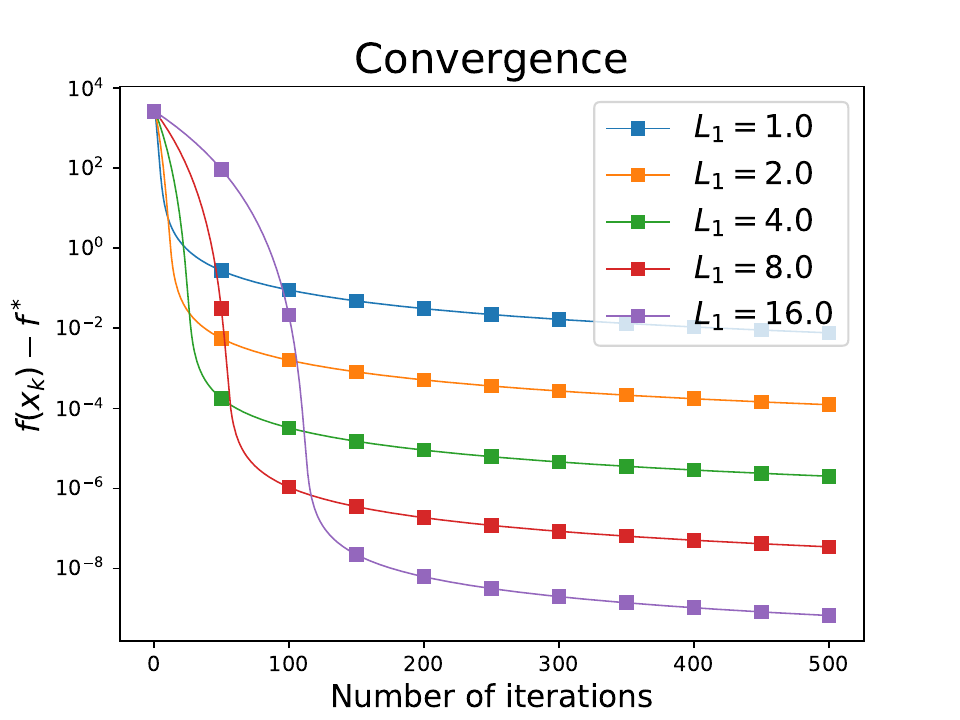}
}
\subfigure[$p=8$]{
\includegraphics[width=.31\textwidth]{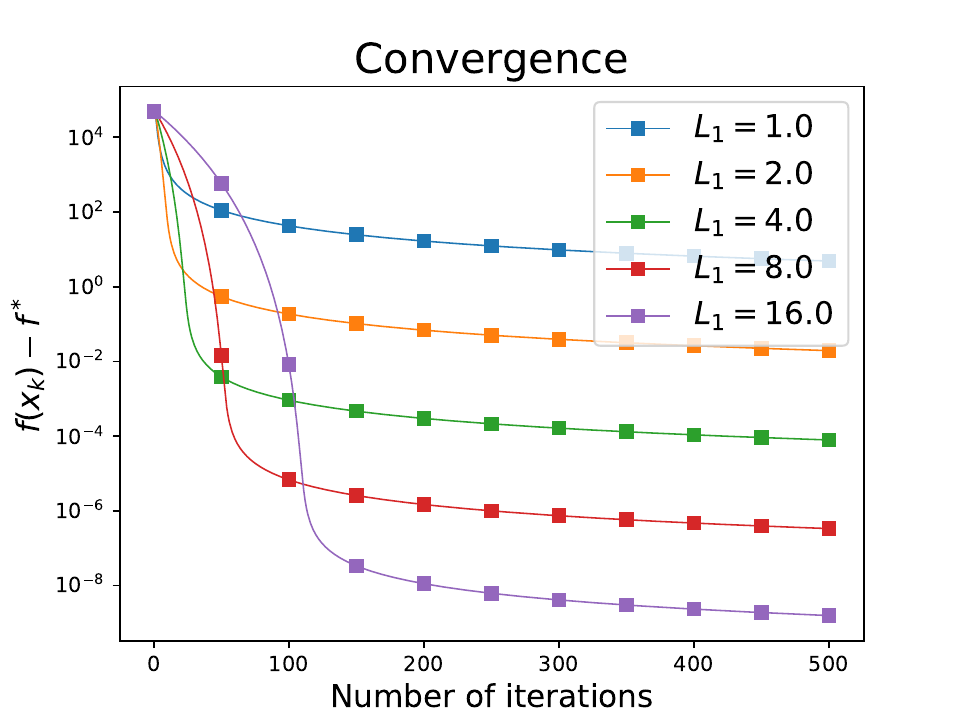}
}
\vspace{-1em}
\caption{Convergence of the gradient method on the same function but with different choices of $(L_0, L_1)$.}
\label{fig:experiment-2}
\end{figure}

\begin{figure}[bt]
\vspace{-1.5em}
\centering
\subfigure[$p=4.0$]{
\includegraphics[width=.31\textwidth]{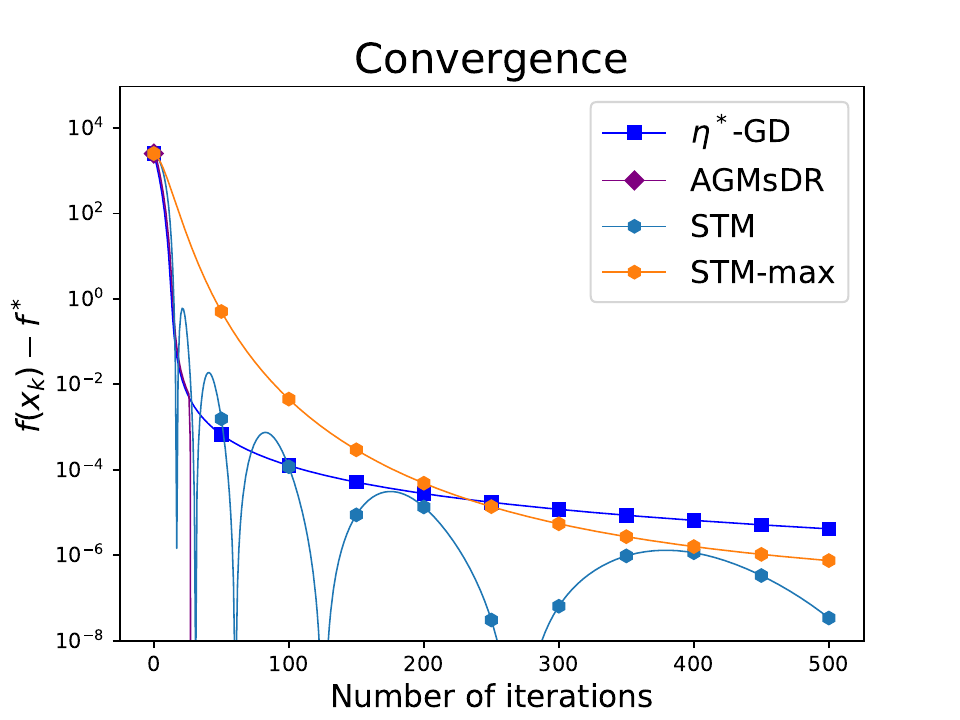}
}
\subfigure[$p=6.0$]{
\includegraphics[width=.31\textwidth]{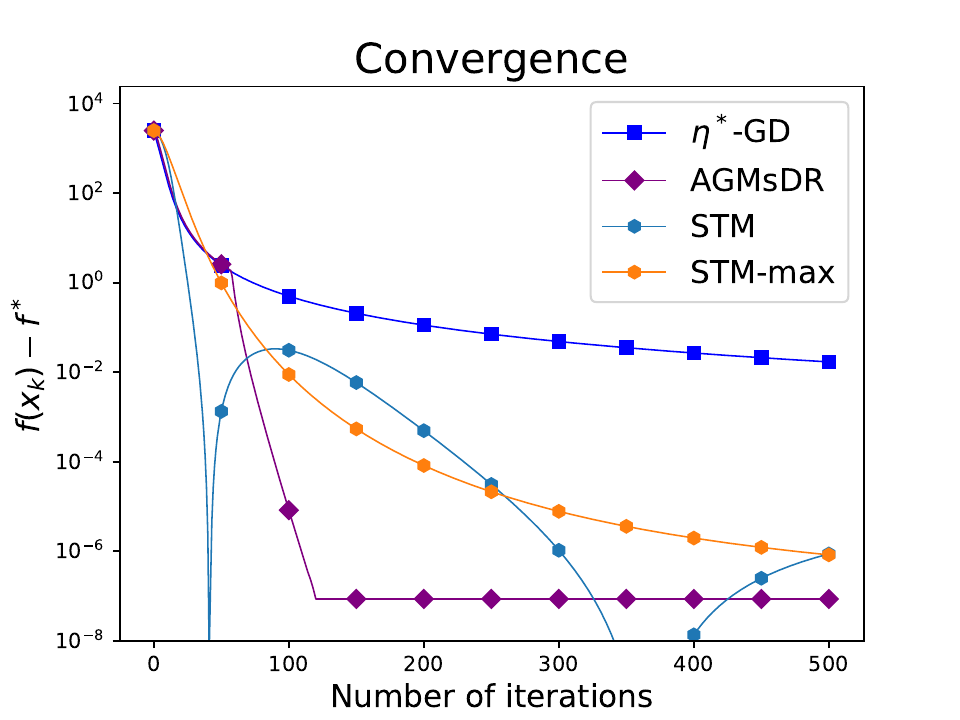}
}
\subfigure[$p=8.0$]{
\includegraphics[width=.31\textwidth]{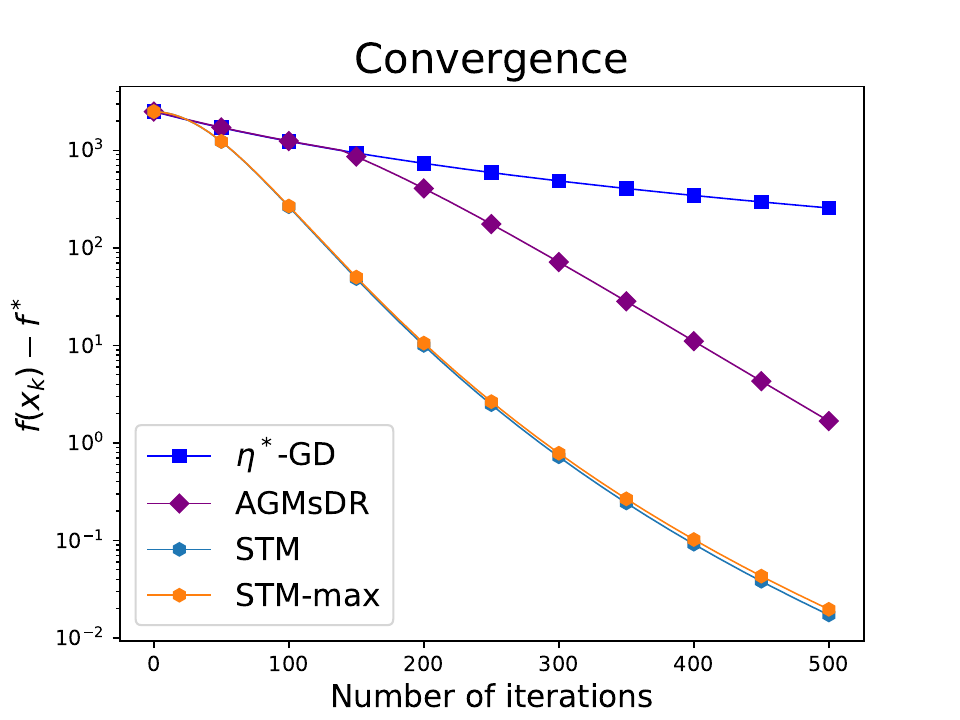}
}
\vspace{-1em}
\caption{Comparison of Algorithm~\ref{algo:AGMsDR} denoted by AGMsDR with Similar Triangles Method (SMT) and Similar Triangles Method Max (STM-max) for $f(x) = \frac{1}{p}\|x\|^p$, with different values $p$.
}
\label{fig:experiment-3}
\end{figure}

\section{Conclusion}

This work investigates gradient methods for $(L_0, L_1)$-smooth optimization
problems.
We have provided new insights into this function class, presented examples,
and identified operations preserving $(L_0, L_1)$-smoothness.
Additionally, we have established refined properties of these functions, leading
to tighter approximations of the objective and its gradient.
Building on these improved properties, we have derived new stepsizes for the
gradient method and connected them to normalized and clipped stepsizes.
For these stepsizes, we have achieved the best-known complexity
$\mathcal{O}(\frac{ L_0 F_{0}}{\epsilon^2} + \frac{ L_1 F_0}{\epsilon})$
for finding an $\epsilon$-stationary point in nonconvex problems.
In the convex setting, our analysis significantly strengthens existing results,
yielding the improved complexity
$\mathcal{O}(\frac{L_0 R^2}{\epsilon} + L_1 R \ln \frac{F_0}{\epsilon})$
for the gradient method with our stepsizes.
We have further analyzed the GM-PS and NGM methods, both of which achieve the
complexity $\mathcal{O}(\frac{L_0 R^2}{\epsilon} + [L_1 R]^2)$,
a significant improvement over previously known bounds.
Notably, these methods automatically adapt to the best possible values
of~$(L_0, L_1)$.
Finally, we have obtained a fast complexity bound of
$\nu \BigO\bigl( \sqrt{\frac{L_0 R^2}{\epsilon}} + \ceil{(L_1 R)^{2/3}} \ceil{\ln \frac{F_0}{\epsilon}} \bigr)$
for AGMsDR, which provides the best efficiency estimate currently available
for minimizing $(L_0, L_1)$-smooth convex functions.
An interesting open question is whether line search can be eliminated in the
accelerated method, potentially replacing $\nu$ in the complexity bound with an
absolute constant.
Additionally, it remains to be seen whether the second term in the complexity
bound can be further improved or if it is indeed optimal.
\printbibliography

\newpage
\appendix
\section{Missing Proofs in Section~\ref{section-properties}}
\label{appendix-properties}

\subsection{Proof of Lemma~\ref{lemma:smooth-2}}
\label{sec:proof-of-lemma:smooth-2}

\begin{proof}
[\cref{def-generalized-smooth} $\implies$ \cref{eq-lemma-smooth-2}]
Let $x, y \in \R^d$ be arbitrary and let $h := y - x \neq 0$ (otherwise the claim is trivial).
Then, for any $t \in [0, 1]$, using \cref{def-generalized-smooth}, we can estimate
\[
    \| \nabla f(x + t h) - \nabla f(x) \|
    \leq
    \| h \| \int_0^t \| \nabla^2 f(x + \tau h) \| d \tau
    \leq
    \| h \| \int_0^t (L_0 + L_1 \| \nabla f(x + \tau h) \|) d \tau
    =:
    \chi(t).
\]
Our goal is to upper bound $\chi(1)$.
We may assume that $L_1 > 0$ since otherwise $\chi(1) = L_0 \| h \|$ and the proof is finished.
Differentiating, we obtain, for any $t \in [0, 1]$,
\[
    \chi'(t)
    =
    L_0 \| h \| + L_1 \| h \| \| \nabla f(x + t h) \|
    \leq
    (L_0 + L_1 \| \nabla f(x) \|) \| h \| + L_1 \| h \| \chi(t),
\]
where the final bound is due to the triangle inequality and the previous display.
Hence, for any $t \in [0, 1]$, we have
\[
    \frac{d}{d t} \ln\bigl[ (L_0 + L_1 \| \nabla f(x) \| + \epsilon) \| h \| + L_1 \| h \| \chi(t) \bigr] \leq L_1 \| h \|,
\]
where $\epsilon > 0$ is arbitrary\footnote{This additional term is needed to handle the possibility of $L_0 + L_1 \| \nabla f(x) \|$ being zero.}.
Integrating this inequality in $t \in [0, 1]$ and noting that $\chi(0) = 0$, we get
\[
    \ln \frac{L_0 + L_1 \| \nabla f(x) \| + \epsilon + L_1 \chi(1)}{L_0 + L_1 \| \nabla f(x) \| + \epsilon} \leq L_1 \| h \|,
\]
or, equivalently,
\[
    \chi(1)
    \leq
    (L_0 + L_1 \| \nabla f(x) \| + \epsilon) \frac{e^{L_1 \| h \|} - 1}{L_1}.
\]
Passing now to the limit as $\epsilon \to 0$, we obtain \cref{eq-lemma-smooth-2}.

[\cref{eq-lemma-smooth-2} $\implies$ \cref{eq-smooth-ineq-1}]
Let $x, y \in \R^d$ be arbitrary points and let $h := y - x$.
Then, using \cref{eq-lemma-smooth-2}, we can estimate
\begin{align*}
    \hspace{2em}&\hspace{-2em}
    |f(y) - f(x) - \langle \nabla f(x), y - x \rangle|
    \leq
    \int_0^1 |\langle \nabla f(x + t h) - \nabla f(x), h \rangle| d t
    \\
    &\leq
    (L_0 + L_1 \| \nabla f(x) \|) \| h \| \int_0^1 \frac{e^{L_1 \| h \| t} - 1}{L_1} d t
    =
    (L_0 + L_1 \| \nabla f(x) \|) \frac{e^{L_1 \| h \|} - L_1 \| h \| - 1}{L_1^2},
\end{align*}
which is exactly~\cref{eq-smooth-ineq-1}.

[\cref{eq-smooth-ineq-1} $\implies$ \cref{def-generalized-smooth}]
Let us fix an arbitrary point $x \in \R^d$ and an arbitrary unit vector $h \in \R^d$.
Then, for any $t > 0$, it follows from \cref{eq-smooth-ineq-1} that
\[
    | f(x + t h) - f(x) - t \langle \nabla f(x), h \rangle | \leq (L_0 + L_1 \| \nabla f(x) \|) \frac{e^{L_1 t} - L_1 t - 1}{L_1^2}.
\]
Dividing both sides by $t^2$ and passing to the limit as $t \to 0$, we get
\[
    | \langle \nabla^2 f(x) h, h \rangle | \leq L_0 + L_1 \| \nabla f(x) \|.
\]
This proves \cref{def-generalized-smooth} since the unit vector $h$ was allowed to be arbitrary.
\end{proof}

\subsection{Proof of Lemma~\ref{lemma:smooth-5}}
\label{sec:proof-of-lemma:smooth-5}

\begin{proof}
[Proof of~\cref{eq-smooth-ineq-4}]
Let $x, y \in \R^d$ be arbitrary points and let us assume w.l.o.g.\ that $L_1 > 0$.
In view of the convexity of~$f$ and \cref{eq-smooth-ineq-1}, for any $h \in \R^d$, we can write the following two inequalities:
\begin{eqnarray*}
    0
    &\leq&
    f(y + h) - f(x) - \langle \nabla f(x), y + h - x \rangle
    \\
    &\leq&
    \beta_f(x, y)
    +
    \langle \nabla f(y) - \nabla f(x), h \rangle
    +
    \frac{L_0 + L_1 \| \nabla f(y) \|}{L_1^2} \phi(L_1 \| h \|),
\end{eqnarray*}
where $\beta_f(x, y) := f(y) - f(x) - \langle \nabla f(x), y - x \rangle$.
Denoting $a := L_0 + L_1 \| \nabla f(y) \| > 0$ and $s := \nabla f(y) - \nabla f(x)$, we therefore obtain
\[
    \beta_f(x, y)
    \geq
    \max_{h \in \R^d} \Bigl\{
        \langle s, h \rangle - \frac{a}{L_1^2} \phi(L_1 \| h \|)
    \Bigr\}
    =
    \max_{r \geq 0} \Bigl\{
        \| s \| r - \frac{a}{L_1^2} \phi(L_1 r)
    \Bigr\}
    =
    \frac{a}{L_1^2} \phi_*\Bigl( \frac{L_1 \| s \|}{a} \Bigr).
\]

[Proof of \cref{eq-smooth-ineq-5}]
Summing up \cref{eq-smooth-ineq-4} with the same inequality but $x$ and $y$ interchanged, we obtain \cref{eq-smooth-ineq-5}.

[Proof of \cref{eq:SimpleLowerBoundOnBregmanDistance}]
By using a lower bound $\phi_*(\gamma) \geq \frac{\gamma^2}{2 + \gamma}$ in \cref{eq-smooth-ineq-4} and denoting $a = \|\nabla f(x) - \nabla f(y)\|$ and
$g= \|\nabla f(y)\|$, we obtain
\begin{align*}
    f(y) &\geq f(x) + \langle \nabla f(x), y - x\rangle + \frac{L_0 +L_1 g}{L_1^2} \frac{L_1^2 a^2}{(L_0 +L_1 g)^2} \frac{L_0 +L_1 g}{2 (L_0 + L_1 g) + a} \cr
    &=f(x) + \langle \nabla f(x), y - x\rangle + \frac{a^2}{2 (L_0 + L_1 g) + a}.
    \qedhere
\end{align*}
\end{proof}

\subsection{Examples and Properties of $(L_0, L_1)$-smooth functions}

\subsection{Proof of Example~\ref{prop-example-1}}

\begin{proof}
Differentiating, we obtain, for any $x \in \R^d$,
\[
    \nabla f(x) = \|x\|^{p-2} x,
    \qquad
    \nabla^2 f(x) = \| x \|^{p - 2} \biggl( I + (p - 2) \frac{x x^{\top}}{\| x \|^2} \biggr),
\]
where $I$ is the identity matrix.
Hence, for any $L_1 > 0$, the minimal value of $L_0$ satisfying the inequality from \cref{asum:smooth} is given by
\begin{align*}
    L_0
    =
    \max_{x \in \R^d} \bigl\{ \| \nabla^2 f(x) \| - L_1 \| \nabla f(x) \| \bigr\}
    &=
    \max_{x \in \R^d} \bigl\{ (p - 1) \| x \|^{p - 2} - L_1 \| x \|^{p - 1} \bigr\}
    \\
    &=
    \max_{\tau \geq 0} \bigl\{ (p - 1) \tau^{\frac{p - 2}{p - 1}} - L_1 \tau \bigr\}.
\end{align*}
The solution of the latter problem is $\tau^* = (\frac{p - 2}{L_1})^{p - 1}$.
Substituting this value, we obtain
\[
    L_0
    =
    (p - 1) \Bigl( \frac{p - 2}{L_1} \Bigr)^{p - 2} - L_1 \Bigl( \frac{p - 2}{L_1} \Bigr)^{p - 1}
    =
    \Bigl( \frac{p - 2}{L_1} \Bigr)^{p - 2}.
    \qedhere
\]
\end{proof}

\subsection{Proof of Example~\ref{example-logistic}}
\begin{proof}
Differentiating, we obtain, for any $x \in \R$,
\[f'(x) = \frac{e^x}{1 + e^x} \in (0,1), \quad f''(x) = \frac{e^x}{(1 + e^x)^2} = f'(x)(1 - f'(x)).\]
Thus, for any $L_1 \in [0, 1]$, the minimal value of $L_0$ satisfying the inequality from \cref{asum:smooth} is
\begin{align*}
L_0 = \max_{x \in \R} \{ |f''(x)| - L_1 |f'(x)| \}
&= \max_{\tau \in (0, 1)} \{\tau (1 - \tau) -L_1 \tau \}
\\
&= \max_{\tau \in (0, 1)} \{ (1 - L_1) \tau - \tau^2 \}
= \frac{1}{4} (1 - L_1)^2.
\qedhere
\end{align*}
\end{proof}

\subsection{Proof of Proposition~\ref{prop-big}}
\begin{proof}

[Claim 1]
Since, $g$ and $\nabla g$ are $M$ and $L$ Lipschitz continuous, $\|\nabla g(x)\| \leq M$ and $\|\nabla^2 g(x)\| \leq L$ for all $x \in \R$. Let $F = f + g$, then, for any $x \in \R^d$, we can estimate
\begin{align*}
    \|\nabla^2 F(x) \|
    &\leq \|\nabla^2 f(x)\| + \|\nabla^2 g(x)\|
    \leq L_0 + L + L_1 \|\nabla f(x)\|
    \\
    &\leq L_0 + L + L_1 \|\nabla g(x)\| + L_1 \|\nabla F(x) \|
    \\
    &\leq (L_0 + L_1 M + L ) + L_1 \|\nabla F(x) \| .
\end{align*}

[Claim 2]
Notice, that the gradient of $f$ is $\nabla f(x) = (\nabla f_1(x_1)^\top, \ldots, \nabla f_n(x_n)^\top)^\top$ and the Hessian of $f$ is $\nabla^2 f(x)$ is a block-diagonal matrix, with $\nabla^2 f_i(x_i)$ blocks. Thus,
\begin{align*}
    \|\nabla^2 f(x)\| &= \max_{1 \leq i \leq n} \|\nabla^2 f_i(x_i)\|
    \leq \max_{1 \leq i \leq n} \{L_{0,i} + L_{1,i}\|\nabla f_i(x_i)\|\}
    \\
    &\leq \max_{1 \leq i \leq n} \{L_{0,i} + L_{1,i}\|\nabla f(x)\|\}
    \leq \max_{1 \leq i \leq n} L_{0,i} + (\max_{1 \leq i \leq n} L_{1, i}) \|\nabla f(x)\|.
\end{align*}

[Claim 3]
Observe that the gradient of a function is $\nabla f(x) = g'(\langle a, x \rangle + b) a$, and the Hessian is $\nabla^2 f(x) = g''(\langle a, x \rangle + b) a a^{\top}$. Hence,
\begin{align*}
\|\nabla^2 f(x) \| = |g''(\langle a, x \rangle + b)| \| a \|^2
&\leq (L_0 + L_1 |g'(\langle a, x \rangle + b)|) \|a\|^2
\\
&= L_0 \|a\|^2 + \|a\| L_1 \|\nabla f(x)\|.
\end{align*}

[Claim 4]
Under our assumptions, $s = \nabla f(x)$ is a one-to-one transformation from $\R^d$ to $\R^d$ (whose inverse transformation is $x = \nabla f_* (s)$); moreover, the Hessians at such a pair of points are inverse to each other: $\nabla^2 f_*(s) = [\nabla^2 f(x)]^{-1}$ (see, e.g., Corollaries~4.1.4 and~4.2.10 in~\parencite{hiriart1993convex}, as well as Example~11.9 from~\parencite{rockafellar2009variational}).
Thus, for any pair of points $x, s \in \R^d$ such that $s = \nabla f(x)$, our assumption $\| \nabla^2 f(x) \| \leq L_0 + L_1 \| \nabla f(x) \|$ which, due to the convexity of~$f$, can be equivalently rewritten as $\nabla^2 f(x) \preceq (L_0 + L_1 \| \nabla f(x) \|) I$, is equivalent to
\[
    \nabla^2 f_*(s)
    \equiv
    [\nabla^2 f(x)]^{-1}
    \succeq
    \frac{1}{L_0 + L_1 \| \nabla f(x) \|} I
    \equiv
    \frac{1}{L_0 + L_1 \| s \|} I.
\]
This proves the claim since the transformation $s = \nabla f(x)$ is one-to-one.
\end{proof}

\subsection{Proof of Lemma~\ref{lemma-phi-properties}}
\label{sec:proof-of-lemma-phi-properties}

\begin{proof}

    [Claim 1]
    Indeed, for any $t \in [0, 3)$, we have
    \[
        \phi(t)
        =
        e^t - t - 1
        =
        \sum_{i=2}^{\infty} \frac{t^i}{i!}
        =
        \sum_{i = 0}^\infty \frac{t^{2 + i}}{(2 + i)!}
        =
        \frac{t^2}{2} \sum_{i = 0}^\infty \frac{t^i}{\prod_{j = 3}^{2 + i} j}
        \leq
        \frac{t^2}{2} \sum_{i = 0}^\infty \frac{t^i}{3^i}
        =
        \frac{t^2}{2 (1 - \frac{t}{3})}.
    \]
    Similarly, for any $t \geq 0$,
    \[
        \phi(t)
        =
        \frac{t^2}{2} \sum_{i = 0}^\infty \frac{t^i}{\prod_{j = 3}^{2 + i} j}
        \leq
        \frac{t^2}{2} \sum_{i = 0}^\infty \frac{t^i}{i!}
        =
        \frac{t^2}{2} e^t.
    \]

    [Claim 2]
    By the definition, for any $\gamma \geq 0$, we have
    \[
        \phi_*(\gamma)
        =
        \max_{t \geq 0} \{ \gamma t - \phi(t) \}
        =
        \max_{t \geq 0} \{ (1 + \gamma) t - e^t \} + 1.
    \]
    Differentiating, we see that the solution of this optimization problem is $t_* = \ln(1 + \gamma)$.
    Hence,
    \[
        \phi_*(\gamma)
        =
        (1 + \gamma) \ln(1 + \gamma) - (1 + \gamma) + 1
        =
        (1 + \gamma) \ln(1 + \gamma) - \gamma.
    \]

    [Claim 3]
    We first show that, for any $\gamma \geq 0$,
    \[
        \ln(1+\gamma) \geq \frac{2 \gamma}{2 + \gamma}.
    \]
    Since both functions coincide at $\gamma = 0$, it suffices to verify the corresponding inequality for the derivatives:
    \[
        \frac{1}{1 + \gamma}
        \geq
        \frac{4}{(2 + \gamma)^2}
        \equiv
        \frac{4}{4 + 4 \gamma + \gamma^2}
        \equiv
        \frac{1}{1 + \gamma + \frac{\gamma^2}{4}}.
    \]
    But this is obviously true.
    Applying the derived inequality, we get, for any $\gamma \geq 0$,
    \[
        \phi_*(\gamma)
        \equiv
        (1 + \gamma) \ln(1 + \gamma) - \gamma
        \geq
        \frac{2 \gamma (1 + \gamma)}{2 + \gamma} - \gamma
        =
        \frac{\gamma [2 (1 + \gamma) - (2 + \gamma)]}{2 + \gamma}
        =
        \frac{\gamma^2}{2 + \gamma},
    \]
    which proves the first part of the claim.

    For the second part, we note that $\phi_*(\gamma)$ and $\frac{\gamma^2}{2}$ coincide at $\gamma = 0$.
    Hence, it suffices to check the corresponding inequality for the derivatives, i.e., to verify that, for all $\gamma \geq 0$,
    \[
        \phi'_*(\gamma) \equiv \ln(1 + \gamma) \leq \gamma.
    \]
    But this follows from the concavity of the logarithm.
\end{proof}

\section{Missing Proofs in Section~\ref{section-gradient}}
\label{Gradient-Method-Proofs}

\subsection{One-Step Progress}

\begin{lemma}
\label{lem-ineq-appendix}
Let $f: \R^d \rightarrow \R$ be an $(L_0, L_1)$-smooth function, let $x \in \R^d$, and let $T(x) = x - \eta \nabla f(x)$, where $\eta$ is given by one of the following formulas:
\begin{align*}
  &(1) \; \eta_* =  \dfrac{1}{L_1 \|\nabla f(x)\|} \ln\Bigl( 1 + \frac{L_1 \|\nabla f(x)\|}{ L_0 + L_1 \| \nabla f(x) \| } \Bigr),
  \quad
  (2) \; \eta_{\SimplifiedIndex}  = \frac{1}{L_0 + \frac{3}{2}L_1 \|\nabla f(x)\|}, \cr
  &(3) \; \eta_{\ClippingIndex} = \min \Bigl\{\frac{1}{2 L_0}, \frac{1}{3 L_1 \| \nabla f(x) \|}\Bigr\}.
\end{align*}
Then,
\begin{equation*}
    f(x) - f(T(x)) \ge \frac{ a \|\nabla f(x)\|^2}{2L_0 + 3L_1 \|\nabla f(x)\|},
\end{equation*}
where $a = 1$ in cases~(1) and~(2), and $a = \frac{1}{2}$ in case~(3).
\end{lemma}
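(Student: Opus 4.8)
The plan is to reduce all three cases to a single one-dimensional inequality and then handle each stepsize by the appropriate bound on $\phi$ from \cref{lemma-phi-properties}. Writing $g := \| \nabla f(x) \|$ and $A := L_0 + L_1 g$, and abbreviating the \emph{step length} $r := \eta g = \| T(x) - x \|$, I would first substitute $y = T(x) = x - \eta \nabla f(x)$ into the descent inequality~\eqref{eq-smooth-ineq-1}. Since $\langle \nabla f(x), T(x) - x \rangle = -\eta g^2 = -gr$ and $\| T(x) - x \| = r$, this yields the master bound
\[
    f(x) - f(T(x)) \geq g r - \frac{A}{L_1^2} \phi(L_1 r).
\]
I may assume $g > 0$ (otherwise $T(x) = x$ and both sides vanish) and treat $L_1 = 0$ via the limit convention of \cref{lemma:smooth-2}, which recovers the classical $L_0$-smooth descent lemma. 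It remains to plug the three stepsizes into the right-hand side.

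For case~(1), the stepsize $\eta_*$ is precisely the maximizer derived in \cref{section-gradient}: the substitution $r = \frac{1}{L_1}\ln(1 + \frac{L_1 g}{A})$ makes the master bound equal to its supremum over $r \geq 0$, namely $\frac{A}{L_1^2}\phi_*(\frac{L_1 g}{A})$. Applying the lower bound $\phi_*(\gamma) \geq \frac{\gamma^2}{2 + \gamma}$ from \cref{lemma-phi-properties}(3) with $\gamma = \frac{L_1 g}{A}$ and simplifying gives exactly $\frac{g^2}{2A + L_1 g} = \frac{g^2}{2 L_0 + 3 L_1 g}$, proving the claim with $a = 1$.

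For case~(2), I would set $r = \frac{2g}{2L_0 + 3L_1 g}$ (equivalently $\eta = \frac{1}{L_0 + \frac{3}{2} L_1 g}$) and note $L_1 r \leq \frac{2}{3} < 3$, so the upper bound $\phi(t) \leq \frac{t^2}{2 - \frac{2}{3} t}$ from \cref{lemma-phi-properties}(1) applies. After substituting and writing $D := 2L_0 + 3L_1 g$, the desired inequality $gr - \frac{A r^2}{2 - \frac{2}{3} L_1 r} \geq \frac{g^2}{D}$ reduces, after clearing denominators, to the clean condition $12 A \leq 6D - 4 L_1 g$, i.e.\ $12 L_0 + 12 L_1 g \leq 12 L_0 + 14 L_1 g$, which always holds. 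This gives $a = 1$ again.

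For case~(3), since $L_1 r = L_1 g \, \eta_{\ClippingIndex} \leq \frac{1}{3}$, the same bound from \cref{lemma-phi-properties}(1) gives $\phi(L_1 r) \leq \frac{9}{16}(L_1 r)^2$, hence $f(x) - f(T(x)) \geq gr - \frac{9}{16} A r^2$. I would then split on which term achieves the minimum defining $\eta_{\ClippingIndex}$: if $3 L_1 g \leq 2 L_0$ then $r = \frac{g}{2L_0}$ and $A \leq \frac{5}{3} L_0$, giving the bound $\frac{17 g^2}{64 L_0}$, which exceeds $\frac{g^2}{2D}$ because $D \geq 2 L_0 \geq \frac{32}{17} L_0$; if $3 L_1 g > 2 L_0$ then $r = \frac{1}{3 L_1}$ and $A < \frac{5}{2} L_1 g$, giving $\frac{17 g}{96 L_1}$, which exceeds $\frac{g^2}{2D}$ because $D \geq 3 L_1 g \geq \frac{48}{17} L_1 g$. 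Both branches yield $a = \frac{1}{2}$. I expect the main obstacle to be purely bookkeeping: in case~(3) one must track the explicit constants through the two-way case split and confirm the numerical inequalities $D \geq \frac{32}{17}L_0$ and $D \geq \frac{48}{17}L_1 g$, while cases~(1) and~(2) are essentially immediate from the conjugacy identity and the reduction above.
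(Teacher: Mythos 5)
Your proposal is correct and follows essentially the same route as the paper: the same master descent bound from \cref{eq-smooth-ineq-1}, the conjugacy identity plus $\phi_*(\gamma)\geq\frac{\gamma^2}{2+\gamma}$ for case~(1), and the upper bound on $\phi$ from \cref{lemma-phi-properties}(1) for cases~(2) and~(3). The only (cosmetic) difference is in case~(3), where you split on which term attains the minimum in $\eta_{\ClippingIndex}$ and track explicit numerical constants, whereas the paper avoids the case split by sandwiching $\eta_{\ClippingIndex}$ between $\frac{1}{2L_0+3L_1 g}$ and $\frac{1}{L_0+\frac{3}{2}L_1 g}$ and running a single unified computation; both yield the factor $a=\frac{1}{2}$.
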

\begin{proof}\relax
    [Case (1)] The proof of this case was already presented in \cref{section-gradient}.

    For the other two cases, we start by applying \cref{lemma:smooth-2} to get
\begin{align*}
    \Delta \defeq f(x) - f(T(x)) &\geq   \langle \nabla f(x), x - T(x) \rangle - \frac{L_0 + L_1 \|\nabla f(x)\|}{L_1^2}  \phi(L_1 \|T(x) - x\|) \cr
    &=  \eta_* g^2 - \frac{L_0 + L_1 g}{L_1^2}  \phi(\eta_* L_1 g),
\end{align*}
    where $g \defeq \| \nabla f(x) \|$ and $\phi(t) = e^t - t -1$.

    [Case (2)]
Estimating $\phi(t) \leq \frac{3 t^2}{6 - 2 t} \leq \frac{t^2}{2 - t}$ (\cref{lemma-phi-properties}) and substituting the definition of~$\eta_{\SimplifiedIndex}$, we can continue as follows:
\begin{align*}
    \Delta
    &\geq
    \eta_{\SimplifiedIndex} g^2 - \frac{L_0 + L_1 g}{L_1^2} \frac{\eta_{\SimplifiedIndex}^2 L_1^2 g^2}{2 - \eta_{\SimplifiedIndex} L_1 g}
    =
    \biggl( 1 - \frac{(L_0 + L_1 g) \eta_{\SimplifiedIndex}}{2 - \eta_{\SimplifiedIndex} L_1 g} \biggr) \eta_{\SimplifiedIndex} g^2
    \\
    &=
    \biggl( 1 - \frac{L_0 + L_1 g}{(L_0 + \frac{3}{2} L_1 g) (2 - \frac{L_1 g}{L_0 + \frac{3}{2} L_1 g})} \biggr) \frac{g^2}{L_0 + \frac{3}{2} L_1 g}
    =
    \frac{g^2}{2 L_0 + 3 L_1 g}.
\end{align*}

[Case (3)]
Observe that
\[
    \frac{1}{2 L_0 + 3 L_1 g}
    \leq
    \eta_{\ClippingIndex}
    \equiv
    \frac{1}{\max\{ 2 L_0, 3 L_1 g \}}
    \leq
    \frac{1}{L_0 + \frac{3}{2} L_1 g}.
\]
Combining these bounds with $\phi(t) \leq \frac{3 t^2}{6 - 2 t}$ (\cref{lemma-phi-properties}~(1)), we get
\begin{align*}
     \Delta
     &\geq \eta_{\ClippingIndex} g^2 - \frac{L_0 + L_1 g}{L_1^2} \frac{3 L_1^2 \eta_{\ClippingIndex}^2  g^2}{6 - 2 \eta_{\ClippingIndex} L_1 g}
     =
     \biggl( 1 - \frac{3 \eta_{\ClippingIndex} (L_0 + L_1 g)}{6 - 2 \eta_{\ClippingIndex} L_1 g} \biggr) \eta_{\ClippingIndex} g^2
     \\
     &\geq \biggl( 1 - \frac{3 (L_0 + L_1 g)}{(L_0 + \frac{3}{2} L_1 g) (6 - \frac{2 L_1 g}{L_0 + \frac{3}{2} L_1 g})} \biggr) \frac{g^2}{2 L_0 + 3 L_1 g}
     \\
     &=
     \biggl( 1 - \frac{3 (L_0 + L_1 g)}{6 L_0 + 7 L_1 g} \biggr) \frac{g^2}{2 L_0 + 3 L_1 g}
     \geq
     \frac{1}{2} \frac{g^2}{2 L_0 + 3 L_1 g}.
     \qedhere
\end{align*}
\end{proof}

\begin{lemma}
    \label{th:gm-monotonicity-of-distance}
    Let $f \colon \R^d \to \R$ be a convex $(L_0, L_1)$-smooth function, let $x \in \R^d$, and let $T(\cdot)$ be any of the update rules from \cref{lem-ineq-appendix}.
    Further, let $x^*$ be a minimizer of~$f$.
    Then,
    \[
    \|T(x) - x^*\| \leq \|x - x^*\|.
    \]
\end{lemma}

\begin{proof}
    Denote $\beta = \langle \nabla f(x), x - x^*\rangle$ and $g = \|\nabla f(x)\|$.
    According to the update rule $T(\cdot)$, we have
    \begin{align*}
    \|T(x) - x^*\| = \|x - x^*\|^2 - 2\eta \beta  + \eta^2 g^2.
    \end{align*}
    Therefore, to prove that $\|T(x) - x^*\| \leq \|x - x^*\|^2$, we need to show that
    \[
        \eta g^2 \leq 2 \beta.
    \]
    Applying bound~\eqref{eq:SimpleLowerBoundOnBregmanDistance} twice, we see that
    \begin{align*}
        \beta_k
        &\equiv
        [f(x) - f^*] + [f^* - f(x) - \langle \nabla f(x), x^* - x \rangle]
        \\
        &\geq
        \frac{g^2}{2 L_0 + 3 L_1 g} +\frac{g^2}{2 L_0 + L_1 g}
        \geq
        \frac{g^2}{L_0 + L_1 g},
    \end{align*}
    where the final inequality follows from the fact that $\frac{1}{a} + \frac{1}{b} \geq \frac{4}{a + b}$ (convexity of $t \mapsto \frac{1}{t}$).
    Thus, we need to check if
    \begin{equation}
        \label{th:gm-monotonicity-of-distance::eq:inequality-to-check}
        \eta \leq \frac{2}{L_0 + L_1 g}.
    \end{equation}
    Furthermore, it suffices to check this inequality only for the largest among the three stepsizes we consider.
    This is the stepsize~$\eta^*$ (see \cref{eq-stepsizes-increasing-relation}).
    Applying $\ln(1 + \gamma) \leq \gamma$ (which holds for any $\gamma \geq 0$), we see that
    \[
        \eta^*
        \equiv
        \frac{1}{L_1 g} \ln\Bigl( 1 + \frac{L_1 g}{L_0 + L_1 g} \Bigr)
        \leq
        \frac{1}{L_0 + L_1 g},
    \]
    so \cref{th:gm-monotonicity-of-distance::eq:inequality-to-check} is indeed satisfied.
\end{proof}

\subsection{Proof of Theorem~\ref{thm-grad-nonconvex}}
\label{sec:proof-of-thm-grad-nonconvex}

\begin{proof}
According to \cref{lem-ineq-appendix}, for any $k \geq 0$, we have
\begin{align*}
     f(x_{k}) - f(x_{k+1}) \geq \frac{ a \|\nabla f(x_k)\|^2}{2L_0 + 3L_1 \|\nabla f(x_k)\|},
\end{align*}
where $a$ is an absolute constant defined in the statement depending on the stepsize choice.
Denote $f_k = f(x_k) - f^*$ ($\geq 0$) and $g_k = \|\nabla f(x_k)\|$.
In this notation, the above inequality reads
\[
    f_k - f_{k+1} \geq a \psi(g_k),
    \qquad
    \psi(\gamma) \defeq \frac{\gamma^2}{2L_0 + 3L_1 \gamma}.
\]
Summing up these inequalities for all $0 \leq k \leq K$ and denoting $g_K^* = \min_{0 \leq k \leq K} g_k$, we get
\[
    F_0 \geq f_0 - f_K \geq a \sum_{k=0}^K \psi(g_k) \geq a(K+1) \psi(g_K^*),
\]
where the final inequality holds since $\psi$ is an increasing function.
Denoting the corresponding inverse function by $\psi^{-1}$, we come to the conclusion that
\[
    g_K^* \leq \psi^{-1}\Bigl( \frac{F_0}{a(K+1)} \Bigr) \leq \epsilon
\]
whenever
\[
    \frac{F_0}{a (K + 1)} \leq \psi(\epsilon),
\]
or, equivalently,
\[
    K + 1
    \geq
    \frac{F_0}{a \psi(\epsilon)}
    \equiv
    \frac{2L_0 F_0}{a \epsilon^2} + \frac{3 L_1 F_0}{a \epsilon}.
    \qedhere
\]
\end{proof}

\subsection{Proof of Theorem~\ref{thm-grad-convex}}
\label{sec:proof-of-thm-grad-convex}

\begin{proof}[Proof of \cref{thm-grad-convex}]
    Let $k \geq 0$ be arbitrary and denote $f_k \defeq f(x_k) - f^*$ and $g_k \defeq \| \nabla f(x_k) \|$.
    According to \cref{lem-ineq-appendix}, we have
    \[
        f_k - f_{k+1} \geq a \psi(g_k),
        \qquad
        \psi(\gamma) \defeq \frac{\gamma^2}{2L_0 + 3L_1 \gamma},
    \]
    where $a$ is an absolute constant defined in the statement depending on the stepsize choice.
    Further, according to \cref{th:gm-monotonicity-of-distance}, the distances $R_k \defeq \| x_k - x^* \|$ are nonincreasing.
    In particular, $R_k \leq R_0 \equiv R$.
    Hence, in view of the convexity of~$f$, we can estimate
    \[
        f_k \leq \langle \nabla f(x_k), x_k - x^* \rangle \leq g_k R_k \leq g_k R.
    \]
    Combining the above two displays and using the fact that the function~$\psi$ is increasing, we obtain
    \[
        f_k - f_{k + 1} \geq a \psi\Bigl( \frac{f_k}{R} \Bigr).
    \]
    Consequently,
    \begin{align*}
        a
        \leq
        \frac{f_k - f_{k + 1}}{\psi(\frac{f_k}{R})}
        \leq
        \int_{f_{k + 1}}^{f_k} \frac{d t}{\psi(\frac{t}{R})}
        &=
        \int_{f_{k + 1}}^{f_k}
        \Bigl( \frac{2 L_0 R^2}{t^2} + \frac{3 L_1 R}{t} \Bigr) d t
        \\
        &=
        2 L_0 R^2 \Bigl( \frac{1}{f_{k + 1}} - \frac{1}{f_k} \Bigr)
        +
        3 L_1 R \ln \frac{f_k}{f_{k + 1}}.
    \end{align*}
    Summing up these inequalities for all $0 \leq k \leq K - 1$ and dropping the negative $\frac{1}{f_0}$ term, we get
    \[
        a K \leq \frac{2 L_0 R^2}{f_K} + 3 L_1 R \ln \frac{f_0}{f_K}.
    \]
    Hence, $f_K \leq \epsilon$ whenever
    \[
        K
        \geq
        \frac{2 L_0 R^2}{a \epsilon} + \frac{3}{a} L_1 R \ln \frac{f_0}{\epsilon}
        \eqdef
        K(\epsilon).
    \]

    To upper bound $K(\epsilon)$, we first estimate $f_0$ using \cref{lemma:smooth-2,lemma-phi-properties}:
    \[
        f_0 \leq \frac{L_0}{L_1^2} \phi(L_1 R) \leq \frac{L_0 R^2}{2} e^{L_1 R}.
    \]
    This gives us
    \[
        a K(\epsilon)
        \leq
        \frac{2 L_0 R^2}{\epsilon}
        +
        3 L_1 R \Bigl( L_1 R + \ln \frac{L_0 R^2}{\epsilon} \Bigr)
        =
        \frac{2 L_0 R^2}{\epsilon}
        +
        3 [L_1 R]^2
        +
        6 L_1 R \ln \biggl( \sqrt{\frac{L_0 R^2}{\epsilon}} \biggr).
    \]
    Estimating $\ln t \leq \frac{t}{e}$ (holding for any $t > 0$) and applying the AM-GM inequality, we get
    \[
        a K(\epsilon)
        \leq
        \frac{2 L_0 R^2}{\epsilon}
        +
        3 [L_1 R]^2
        +
        \frac{6}{e} \sqrt{\frac{L_0 R^2}{\epsilon} [L_1 R]^2}
        \leq
        \frac{(2 + \frac{3}{e}) L_0 R^2}{\epsilon}
        +
        \Bigl( 3 + \frac{3}{e} \Bigr) [L_1 R]^2.
        \qedhere
    \]
\end{proof}

\section{Missing Proofs in Section~\ref{section-normalized}}
\label{appendix-normalized}

\subsection{General Result}

\begin{lemma}
    \label{thm:ngm-general-estimate}
    Let $\{ x_k \}$ be the iterates of NGM~\eqref{eq-normalized} with arbitrary coefficients $\beta_k > 0$, as applied to problem~\eqref{eq-problem} with an $(L_0, L_1)$-smooth convex function~$f$.
    Then, $\min_{0 \leq k \leq K} f(x_k) - f^* \leq \epsilon$ for any given $K \geq 0$ and $\epsilon > 0$ whenever
    \[
        \delta_K
        \defeq
        \frac{ R^2 + \sum_{k=0}^K \beta_k^2}{2\sum_{k = 0}^{K} \beta_k }
        \leq
        \delta(\epsilon)
        \defeq
        \min\biggl\{ \frac{3}{2 L_1}, \sqrt{\frac{\epsilon}{L_0}} \biggr\},
    \]
    where $R \defeq \| x_0 - x^* \|$ is the distance from the initial point to a solution~$x^*$ of the problem.
\end{lemma}

\begin{proof}
    According to \cref{eq-normalized}, for any $k \geq 0$, we have
    \begin{align*}
        \|x_{k+1} - x^*\|^2 &= \|x_k - x^*\|^2 - 2 \eta_k \langle \nabla f(x_k), x_k - x^* \rangle + \eta_k^2 \|\nabla f(x_k)\|^2 \cr
        &=\|x_k - x^*\|^2 - 2 \beta_k v_k + \beta_k^2,
    \end{align*}
    where $v_k \defeq \frac{\langle \nabla f(x_k), x_k - x^* \rangle}{\|\nabla f(x_k)\|}$ ($\geq 0$).
    Summing up these relations over $k = 0, \ldots, K$ and rearranging the terms, we obtain
    \begin{align*}
       2 \sum_{k = 0}^{K} \beta_k v_k  \leq  R^2 + \sum_{k=0}^K \beta_k^2.
    \end{align*}
    Denoting $v_K^* = \min_{0 \leq k \leq K} v_k$, we get
    \begin{equation}
        \label{thm:ngm-general-estimate::eq:bound-on-v}
        v_K^*
        \leq
        \frac{ R^2 + \sum_{k=0}^K \beta_k^2}{2\sum_{k = 0}^{K} \beta_k }
        \eqdef
        \delta_K.
    \end{equation}
    Let $f_K^* \defeq \min_{0 \leq k \leq K} f(x_k)$.
    Then, by Lemma~\ref{lemma-nesterov},
    \[
        f_K^* - f^* \leq \max_{z} \{ f(z) - f^* : \|z -x^*\| \leq  v_K^* \}.
    \]
    Applying \cref{lemma:smooth-2} and the fact that $\phi(t) \leq \frac{3 t^2}{6 - 2 t}$ for any $t \in [0, 3)$ (\cref{lemma-phi-properties}), we obtain
    \[
        f_K^* - f^*
        \leq \frac{L_0}{L_1^2} \phi(L_1 v_K^*)
        \leq \frac{3 L_0 (v_K^*)^2}{6 - 2 L_1 v_K^*}
    \]
    whenever $L_1 v_k^* < 3$.
    To achieve the desired accuracy $\epsilon$, it thus suffices to ensure that the following two inequalities are satisfied:
    \[
        2 L_1 v_K^* \leq 3,
        \qquad
        L_0 (v_K^*)^2 \leq \epsilon.
    \]
    This is equivalent to
    \[
        v_K^*
        \leq
        \min\Bigl\{ \frac{3}{2 L_1}, \sqrt{\frac{\epsilon}{L_0}} \Bigr\}
        \eqdef
        \delta(\epsilon),
    \]
    and follows from $\delta_k \leq \delta(\epsilon)$ in view of \cref{thm:ngm-general-estimate::eq:bound-on-v}.
\end{proof}

\subsection{Proof of Theorem~\ref{thm-normalized}}
\label{sec:proof-of-thm-normalized}

\begin{proof}
    According to \cref{thm:ngm-general-estimate}, we need to ensure that
    \[
        \delta_K
        \defeq
        \frac{ R^2 + \sum_{k=0}^K \beta_k^2}{2\sum_{k = 0}^{K} \beta_k }
        \leq
        \delta(\epsilon)
        \defeq
        \min\biggl\{ \frac{3}{2 L_1}, \sqrt{\frac{\epsilon}{L_0}} \biggr\}.
    \]
    In our case,
    \[
        \delta_K
        =
        \frac{ R^2 + \hat{R}^2}{2 \hat{R} \sqrt{K + 1}}
        =
        \frac{\bar{R}}{\sqrt{K+1}}.
    \]
    Therefore, $\delta_K \leq \delta(\epsilon)$ iff
    \[
        K + 1
        \geq
        \frac{\bar{R}^2}{\delta^2(\epsilon)}
        \equiv
        \max\Bigl\{ \frac{4}{9} [L_1 \bar{R}]^2, \frac{L_0 \bar{R}^2}{\epsilon} \Bigr\}.
        \qedhere
    \]
\end{proof}

\subsection{Analysis for Time-Varying Step Size}

\begin{theorem}
    \label{appendix-thm-normalized-decreasing}
    Let $\{ x_k \}$ be the iterates of NGM~\eqref{eq-normalized}, as applied to problem~\eqref{eq-problem} with an $(L_0, L_1)$-smooth nonlinear\footnote{This means that $L_0 + L_1 \| \nabla f(x) \| > 0$ for any $x \in \R^d$, see \cref{lemma:smooth-2}.} convex function~$f$.
    Consider decreasing coefficients $\beta_k = \frac{\hat{R}}{\sqrt{k+1}}$, $k \geq 0$, where $\hat{R} > 0$ is a parameter.
    Then, $\min_{0 \leq k \leq K} f(x_k) - f^* \leq \epsilon$ for any given $\epsilon > 0$ whenever
    \begin{equation*}
        K + 1
        \geq
        \max\Bigl\{
            4 N_{\bar{R}}(\epsilon),
            \Bigl( \frac{e}{e - 1} \Bigr)^2
            N_{\hat{R}}(\epsilon)
            \PositivePart{\ln(4 N_{\hat{R}}(\epsilon))}^2
        \Bigr\},
    \end{equation*}
    where $\bar{R} \defeq \frac{1}{2} (\frac{R^2}{\hat{R}} + \hat{R})$, $R \defeq \| x_0 - x^* \|$ ($x^*$ is an arbitrary solution of the problem), and
    \[
        N_D(\epsilon)
        \defeq
        \max\Bigl\{ \frac{4}{9} [L_1 D]^2, \frac{L_0 D^2}{\epsilon} \Bigr\}.
    \]
\end{theorem}
\begin{proof}
    According to \cref{thm:ngm-general-estimate}, we need to ensure that
    \[
        \delta_K
        \defeq
        \frac{ R^2 + \sum_{k=0}^K \beta_k^2}{2\sum_{k = 0}^{K} \beta_k }
        \leq
        \delta(\epsilon)
        \defeq
        \min\biggl\{ \frac{3}{2 L_1}, \sqrt{\frac{\epsilon}{L_0}} \biggr\}.
    \]
    For our choice of $\beta_k$, we obtain, by standard results (e.g., Lemma~2.6.3 in~\cite{rodomanov2022quasi}), that
    \[
        \sum_{k = 0}^K \beta_k^2
        =
        \hat{R}^2 \sum_{k = 1}^{K + 1} \frac{1}{k}
        \leq
        \hat{R}^2 [1 + \ln(K + 1)],
        \qquad
        \sum_{k = 0}^K \beta_k
        =
        \hat{R} \sum_{k = 1}^{K + 1} \frac{1}{\sqrt{k}}
        \geq
        \hat{R} \sqrt{K + 1}.
    \]
    Hence,
    \[
        \delta_K
        \leq
        \frac{R^2 + \hat{R}^2 [1 + \ln(K + 1)]}{2 \hat{R} \sqrt{K + 1}}
        =
        \frac{\bar{R}}{\sqrt{K + 1}}
        +
        \frac{\hat{R} \ln(K + 1)}{2 \sqrt{K + 1}}.
    \]
    To ensure that $\delta_K \leq \delta(\epsilon)$, it suffices to ensure that the following two inequalities are satisfied:
    \[
        \frac{\bar{R}}{\sqrt{K + 1}} \leq \frac{\delta(\epsilon)}{2},
        \qquad
        \frac{\hat{R} \ln(K + 1)}{\sqrt{K + 1}} \leq \delta(\epsilon).
    \]
    The first inequality is equivalent to $K + 1 \geq \frac{4 \bar{R}^2}{\delta^2}$.
    To get the second one, it suffices to take, according to \cref{thm:inverting-inverse-power-with-logarithmic-factor} (with $p = \frac{1}{2}$ and $\delta' = \frac{\delta(\epsilon)}{\hat{R}}$),
    \[
        K + 1
        \geq
        \biggl(
            \frac{e}{e - 1}
            \frac{2 \hat{R}}{\delta(\epsilon)}
            \PositivePart[\Big]{\ln \frac{2 \hat{R}}{\delta(\epsilon)}}
        \biggr)^2
        \equiv
        \Bigl( \frac{e}{e - 1} \Bigr)^2
        \frac{\hat{R}^2}{\delta^2(\epsilon)}
        \PositivePart[\Big]{\ln \frac{4 \hat{R}^2}{\delta^2(\epsilon)}}^2.
    \]
    Putting these two inequalities together and substituting our formula for $\delta(\epsilon)$, we come to the requirement that
    \begin{align*}
        K + 1
        &\geq
        \max\biggl\{
            \frac{4 \bar{R}^2}{\delta^2(\epsilon)},
            \Bigl( \frac{e}{e - 1} \Bigr)^2
            \frac{\hat{R}^2}{\delta^2(\epsilon)}
            \PositivePart[\Big]{\ln \frac{4 \hat{R}^2}{\delta^2(\epsilon)}}^2
        \biggr\}
        \\
        &=
        \max\Bigl\{
            4 N_{\bar{R}}(\epsilon),
            \Bigl( \frac{e}{e - 1} \Bigr)^2
            N_{\hat{R}}(\epsilon)
            \PositivePart{\ln(4 N_{\hat{R}}(\epsilon))}^2
        \Bigr\},
    \end{align*}
    where
    \[
        N_D(\epsilon)
        \defeq
        \frac{D^2}{\delta^2(\epsilon)}
        =
        \max\Bigl\{ \frac{4}{9} [L_1 D]^2, \frac{L_0 D^2}{\epsilon} \Bigr\}.
        \qedhere
    \]
\end{proof}
\begin{lemma}
    \label{thm:inverting-inverse-power-with-logarithmic-factor}
    For any real $p, \delta > 0$, we have the following implication\footnote{For $t = 0$, we define by continuity $\frac{\ln t}{t^p} \equiv -\infty$.}:
    \[
        t \geq \biggl( \frac{e}{e - 1} \frac{\PositivePart{\ln \frac{1}{p \delta}}}{p \delta} \biggr)^{\frac{1}{p}}
        \qquad \implies \qquad
        \frac{\ln t}{t^p} \leq \delta.
    \]
\end{lemma}

\begin{proof}

    W.l.o.g., we can assume that $p = 1$, and our goal is to prove the implication
    \[
        t \geq \frac{e}{e - 1} \frac{\PositivePart{\ln \frac{1}{\delta}}}{\delta} \eqdef t(\delta)
        \qquad \implies \qquad
        \phi(t) \defeq \frac{\ln t}{t} \leq \delta.
    \]
    The general case then follows by the change of variables $t = (t')^p$ and $\delta = p \delta'$.

    \
    Further, we can assume that $\delta \leq \frac{1}{e}$ since otherwise $\phi(t) \leq \frac{1}{e} \leq \delta$ for any $t \geq 0$ (since the maximum of $\phi$ is achieved at $t_* = e$).
    Under this additional assumption, $\PositivePart{\ln \frac{1}{\delta}} = \ln \frac{1}{\delta}$.

    Let us now assume that $t \geq t(\delta)$ ($\geq \frac{e^2}{e - 1} \geq e$ since $\delta \leq \frac{1}{e}$).
    Since the function $\phi$ is decreasing on the interval $[e, +\infty)$, we have
    \[
        \phi(t)
        \leq
        \phi(t(\delta))
        =
        \frac{\ln t(\delta)}{t(\delta)}
        =
        \frac{\ln t(\delta)}{\frac{e}{e - 1} \ln \frac{1}{\delta}} \delta.
    \]
    To finish the proof, it remains to show that the final fraction in the above display is $\leq 1$, or, equivalently, that
    \[
        t(\delta)
        \equiv
        \frac{e}{e - 1} \frac{\ln \frac{1}{\delta}}{\delta}
        \leq
        \Bigl( \frac{1}{\delta} \Bigr)^{\frac{e}{e - 1}}.
    \]
    Rearranging and denoting $u \defeq (\frac{1}{\delta})^{\frac{1}{e - 1}}$, we see that the above inequality is equivalent to
    \[
        \phi(u) \equiv \frac{\ln u}{u} \leq \frac{1}{e}.
    \]
    But this is indeed true since $\phi$ attains its maximum value at $u = e$.
\end{proof}

\section{Missing Proofs in Section~\ref{section-polyak}}
\label{appendix-polyak}

\subsection{Proof of Theorem~\ref{thm-polyak}}
\label{sec:proof-of-thm-polyak}

\begin{proof}
Let $x^*$ be an arbitrary solution.
By the method's update rule and convexity of $f(\cdot)$, we get, for all $k \geq 0$,
\begin{align*}
    \|x_{k+1} - x^*\|^2 &= \|x_k - x^*\|^2 - 2 \eta_k \langle \nabla f(x_k), x_k - x^* \rangle + \eta_k^2 \|\nabla f(x_k)\|^2 \cr
    &\leq \|x_k - x^*\|^2 - \frac{[f(x_k) - f^*]^2}{\|\nabla f (x_k)\|^2}.
\end{align*}
Denote $R_k = \|x_k - x^*\|$, $g_k = \|\nabla f(x_k)\|$ and $f_k = f(x_k) - f^*$.
According to Lemma~\ref{lemma:smooth-5}, for each $k \geq 0$, it holds that
\begin{align*}
     f_k \geq \psi(g_k),
     \qquad
     \text{where} \quad
     \psi(g) := \frac{g^2}{2L_0 + 3L_1 g}, \quad g \geq 0.
\end{align*}
Observe that the function $\psi$ is increasing, so its inverse $\psi^{-1}$ is well-defined and is increasing as well.
In terms of this function, $g_k \leq \psi^{-1}(f_k)$ and hence
\begin{align*}
    R_k^2 - R_{k+1}^2 \geq \frac{f_k^2}{g_k^2} \geq \Bigl( \frac{f_k}{\psi^{-1}(f_k)}\Bigr)^2.
\end{align*}
Summing up these inequalities over $0 \leq k \leq K$ and rearranging, we get
\begin{align*}
    \sum_{k=0}^{K} \Bigl( \frac{f_k}{\psi^{-1}(f_k)}\Bigr)^2
    \leq
    R_0^2 - R_{K+1}^2
    \leq
    R_0^2
    \equiv
    R^2.
\end{align*}
Note that $\frac{\psi^{-1}(t)}{t}$ is increasing in $t$ (as the composition of increasing in $\gamma$ function $\frac{\psi(\gamma)}{\gamma} \equiv \frac{\gamma}{2 L_0 + 3 L_1 \gamma}$ with increasing in $t$ function $\gamma = \psi^{-1}(t)$). Thus, by taking a minimum over the terms on the left-hand side of the above display and denoting $f_K^* \defeq \min_{0 \leq k \leq K} f_k$, we get
\begin{align*}
     (K + 1) \Bigl( \frac{f_K^*}{\psi^{-1}(f_K^*)}\Bigr)^2 \leq R^2.
\end{align*}
Rearranging, we obtain
\begin{align*}
     \psi^{-1}(f_K^*) \geq \frac{\sqrt{K + 1} f_K^*}{R},
\end{align*}
or, equivalently,
\begin{align*}
     f_K^*
     \geq
     \psi\Bigl( \frac{\sqrt{K + 1} f_K^*}{R} \Bigr)
     \equiv
     \frac{(K + 1) (f_K^*)^2}{R^2\bigl( 2 L_0 + 3 L_1 \frac{\sqrt{K + 1} f_K^*}{R} \bigr)}
     =
     \frac{(f_K^*)^2}{\frac{2 L_0 R^2}{K + 1} + \frac{3 L_1 R}{\sqrt{K + 1}} f_K^*}.
\end{align*}
Hence,
\begin{align*}
     f_K^* \leq  \frac{2 L_0 R^2}{(K + 1) (1 - 3 L_1 R \sqrt{K + 1})},
\end{align*}
whenever $3 L_1 R \sqrt{K + 1} < 1$.
Thus, to achieve desired accuracy $\epsilon > 0$, the number $K$ of iterations should satisfy  the following conditions:
\begin{equation*}
    3L_1 R \sqrt{K + 1} \leq \frac{1}{2}, \qquad \frac{4 L_0 R^2}{K + 1} \leq \epsilon.
\end{equation*}
Thus, the final iteration complexity is $K + 1 \geq \max\{ \frac{4 L_0 R^2}{\epsilon}, [6 L_1 R]^2 \}$.
\end{proof}

\section{Missing Proofs in Section~\ref{section-accelerated}}
\label{appendix-accelerated}

The proof of \cref{thm-accelerated-AGMsDR} is similar to the original proof
Theorem~1 in~\parencite{nesterov2021primal}, but, instead of the
Lipschitz-smoothness of~$f$, we use the definition of~$M_k$.

\subsection{Proof of Theorem~\ref{thm-accelerated-AGMsDR}}
\label{sec:proof-of-thm-accelerated-AGMsDR}

\begin{proof}
Let us prove by induction that, for any $k \geq 0$, we have
\begin{equation}
    \label{eq:lower-bound-for-minimum-of-estimating-function}
    A_k f(x_k) \leq \zeta_k^* \defeq \zeta_k(v_k).
\end{equation}
This trivially holds for $k=0$ since $A_0 = 0$ and $\zeta_0^* = 0$. Now assume that \cref{eq:lower-bound-for-minimum-of-estimating-function} is satisfied for some $k \geq 0$ and let us prove that it is also satisfied for the next index $k' = k + 1$. We start by noting that
\begin{align}
    \label{eq-thm-accelerated-1}
    \zeta_{k + 1}^* &= \zeta_{k+1}(v_{k+1}) =  \zeta_{k}(v_{k + 1}) + a_{k+1} [f(y_k) + \langle \nabla f(y_k), v_{k + 1} - y_k \rangle] \cr
    &\geq   \zeta_{k}^* + \frac{1}{2}\|v_{k + 1} - v_k\|^2 + a_{k+1} [f(y_k) + \langle \nabla f(y_k), v_{k + 1} - y_k \rangle] \cr
    &\geq A_k f(x_k) + \frac{1}{2}\|v_{k + 1} - v_k\|^2 + a_{k+1} [f(y_k) + \langle \nabla f(y_k), v_{k + 1} - y_k \rangle],
\end{align}
where the first inequality holds due to the strong convexity of $\zeta_k$, and the second one is due to the induction hypothesis.
Further, note that, by construction, $y_k \in [v_k, x_k]$.
Considering separately any of the three possible situations, $y_k = v_k$, $y_k = x_k$ and $y_k \in (v_k, x_k)$, we see that, in all cases,
\[
    \langle \nabla f(y_k), v_k - y_k \rangle \geq 0.
\]
Substituting this estimate into \cref{eq-thm-accelerated-1} and using the fact
that $f(y_k) \leq f(x_k)$ (by construction), we obtain
\begin{align*}
    \zeta_{k+1}^* &\geq  A_k f(x_k) + a_{k+1} f(y_k) + \frac{1}{2}\|v_{k + 1} - v_k\|^2 + a_{k+1} \langle \nabla f(y_k), v_{k + 1} - v_k \rangle \cr
&\geq  A_{k + 1} f(y_k) + \frac{1}{2}\|v_{k + 1} - v_k\|^2 + a_{k+1} \langle \nabla f(y_k), v_{k + 1} - v_k \rangle
\\
&\geq
A_{k+1} f(y_k)  -\frac{a_{k+1}^2}{2} \|\nabla f(y_k)\|^2
=
A_{k + 1} \Bigl[ f(y_k) - \frac{1}{2 M_k} \| \nabla f(y_k) \|^2 \Bigr]
=
A_{k + 1} f(x_{k + 1}),
\end{align*}
where the final identity is due to the definition of~$M_k$, while the preceeding
one follows from the definition of $a_{k + 1}$, which ensures that
\begin{equation}
    \label{eq:equation-for-scaling-coefficient}
    M_k a_{k + 1}^2 = A_{k + 1}.
\end{equation}
The induction is now complete.

Let $k \geq 1$ be arbitrary.
By the convexity of~$f$ and the definition of $A_k$, we have
\[
\zeta_k^*
\leq \zeta_k(x^*)
= \frac{1}{2} R^2 + \sum_{i=0}^{k-1} a_{i+1} [f(y_{i}) + \langle \nabla f(y_i), x^* - y_i\rangle]
\leq
\frac{1}{2} R^2 + A_{k} f^*.
\]
where $R \equiv \|x_0 - x^*\|$.
Combining this with \cref{eq:lower-bound-for-minimum-of-estimating-function}, we conclude that
\begin{equation}
    \label{eq:bound-of-function-residual}
    f(x_k) - f^* \leq \frac{R^2}{2 A_k}.
\end{equation}

It remains to estimate the rate of growth of the coefficients~$A_k$.
From \cref{eq:equation-for-scaling-coefficient} and the definition of~$A_{k + 1}$, it follows, for any $k \geq 0$, that
\begin{align*}
    \sqrt{\frac{A_{k + 1}}{M_k}}
    =
    a_{k + 1}
    =
    A_{k + 1} - A_k
    &=
    (\sqrt{A_{k + 1}} + \sqrt{A_k}) (\sqrt{A_{k + 1}} - \sqrt{A_k})
    \\
    &\leq
    2 \sqrt{A_{k + 1}} (\sqrt{A_{k + 1}} - \sqrt{A_k}).
\end{align*}
Cancelling $\sqrt{A_{k+1}}$ on both sides and telescoping the resulting
inequalities, we get, for any $k \geq 1$,
\[
  A_{k}  \ge \frac{1}{4} \left( \sum_{i=0}^{k-1} \sqrt{\frac{1}{M_i}} \right)^2.
\]
Substituting this estimate into \cref{eq:bound-of-function-residual}, we obtain
the first relation in \cref{eq:thm-accelerated-AGMsDR:main-bound}.
The second one follows trivially from the definition of~$M_k$
and the fact that $f(y_k) \leq f(x_k)$.
\end{proof}

\subsection{Proof of Theorem~\ref{th:acc-method:improved-complexity-bound}}
\label{sec:proof-of-improved-complexity-bound}

\begin{proof}
  Let $k \geq 0$ be arbitrary, and denote $f_k \defeq f(x_k) - f^*$
  and $g_k \defeq \| \nabla f(y_k) \|$.
  According to \cref{thm-accelerated-AGMsDR}, we have
  \[
    f_{k + 1}
    \leq
    \frac{2 R^2}{\bigl( \sum_{i = 0}^k \frac{1}{\sqrt{M_i}} \bigr)^2},
    \qquad
    f_k - f_{k + 1} \geq \frac{g_k^2}{2 M_k},
  \]
  where $M_k = \frac{\| \nabla f(y_k) \|^2}{2 [f(y_k) - f(x_{k + 1})]}$.
  Further, from the fact that $x_{k + 1} = T(y_k)$ and \cref{lem-ineq-appendix},
  we know that
  \[
    M_k
    \leq
    \frac{2 L_0 + 3 L_1 g_k}{2 a}
    \equiv
    \frac{1}{2} (L_0' + L_1' g_k),
  \]
  where $L_0' \defeq \frac{2}{a} L_0$, $L_1' \defeq \frac{3}{a} L_1$,
  and $a$ is as defined in the statement.
  Thus,
  \[
    f_{k + 1}
    \leq
    \frac{(R')^2}{
      \bigl( \sum_{i = 0}^k \frac{1}{\sqrt{L_0' + L_1' g_i}} \bigr)^2
    },
    \qquad
    f_k - f_{k + 1} \geq \frac{g_k^2}{L_0' + L_1' g_k},
  \]
  where $R' \defeq 2 R$.
  Applying \cref{th:acc-method:recurrent-sequence}, we conclude that
  $f_k \leq \epsilon$ for a given $0 < \epsilon \leq f_0$ whenever
  $k \geq K(\epsilon)$, where
  \begin{align*}
    K(\epsilon)
    &\defeq
    \sqrt{\frac{6 L_0' (R')^2}{\epsilon}}
    +
    \ceil[\big]{3^{1 / 3} (L_1' R')^{2 / 3}}
    \ceil[\Big]{\log_2 \frac{2 f_0}{\epsilon}}
    \\
    &=
    \sqrt{\frac{6 (\frac{2}{a} L_0) (2 R)^2}{\epsilon}}
    +
    \ceil[\big]{3^{1 / 3} \bigl\{ (\tfrac{3}{a} L_1) (2 R) \bigr\}^{2 / 3}}
    \ceil[\Big]{\log_2 \frac{2 f_0}{\epsilon}}
    \\
    &=
    \sqrt{\frac{48 L_0 R^2}{a \epsilon}}
    +
    \ceil[\big]{3 (\tfrac{2}{a} L_1 R)^{2 / 3}}
    \ceil[\Big]{\log_2 \frac{2 f_0}{\epsilon}}.
  \end{align*}

  To estimate the oracle complexity, it remains to note that each iteration of
  the algorithm requires exactly one computation of the gradient plus at most
  $\nu$ oracle queries for the line search.
  Hence, the overall oracle complexity to compute $x_k$ is at most $(\nu + 1) k$.
\end{proof}

\begin{lemma}
  \label{th:acc-method:recurrent-sequence}
  Let $(f_k)_{k = 0}^\infty$, $(g_k)_{k = 0}^\infty$ be nonnegative real
  sequences such that, for any $k \geq 0$, the following inequalities hold:
  \[
    f_{k + 1}
    \leq
    \frac{R^2}{\bigl( \sum_{i = 0}^k \frac{1}{\sqrt{L_0 + L_1 g_i}} \bigr)^2},
    \qquad
    f_k - f_{k + 1} \geq \frac{g_k^2}{L_0 + L_1 g_k},
  \]
  where $R, L_0, L_1 \geq 0$ are certain constants, and
  $L_0 + L_1 g_k > 0$ for all $k \geq 0$.
  Then, for any integer $k \geq 0$ and $N \geq 1$, it holds that
  \[
    f_{k + N} \leq \frac{3 L_0 R^2}{2 N^2} + \frac{3 (L_1 R)^2}{2 N^3} f_k.
  \]
  Consequently, $f_k \leq \epsilon$ for a given $0 < \epsilon \leq f_0$
  whenever
  \[
    k
    \geq
    \sqrt{\frac{6 L_0 R^2}{\epsilon}}
    +
    \ceil[\big]{3^{1 / 3} (L_1 R)^{2 / 3}}
    \ceil[\Big]{\log_2 \frac{2 f_0}{\epsilon}}.
  \]
\end{lemma}

\begin{proof}
  Let $k \geq 0$ and $N \geq 1$ be arbitrary.
  Denote $\bar{g}_{k, N} \defeq \frac{1}{N} \sum_{i = k}^{k + N - 1} g_i$.
  Then, dropping part of the nonnegative terms and applying Jensen's inequality
  to the convex function $\tau \mapsto \frac{1}{\sqrt{\tau}}$, we see that
  \[
    \sum_{i = 0}^{k + N - 1} \frac{1}{\sqrt{L_0 + L_1 g_i}}
    \geq
    \sum_{i = k}^{k + N - 1} \frac{1}{\sqrt{L_0 + L_1 g_i}}
    \geq
    \frac{N}{\sqrt{L_0 + L_1 \bar{g}_{k, N}}}.
  \]
  Hence,
  \[
    f_{k + N} \leq \frac{(L_0 + L_1 \bar{g}_{k, N}) R^2}{N^2}.
  \]
  Our goal now is to estimate how fast $\bar{g}_{k, N}$ can grow.

  According to our assumptions, for any $i \geq 0$, we have
  $f_i - f_{i + 1} \geq \psi(g_i)$, where $\psi \colon [0, +\infty) \to \R$
  is an increasing convex function $\psi(g) \defeq \frac{g^2}{L_0 + L_1 g}$.
  Summing up these inequalities and applying Jensen's inequality, we obtain
  \[
    f_k - f_{k + N}
    \geq
    \sum_{i = k}^{k + N - 1} \psi(g_i)
    \geq
    N \psi(\bar{g}_{k, N})
    \quad (\geq 0).
  \]
  Hence, $\bar{g}_{k, N} \leq \psi^{-1}(\frac{f_k - f_{k + N}}{N})$, where
  $\psi^{-1}$ is the inverse function of~$\psi$.
  Consequently,
  \[
    f_{k + N}
    \leq
    \frac{[L_0 + L_1 \psi^{-1}(\frac{f_k - f_{k + N}}{N})] R^2}{N^2}.
  \]
  Note that, for any $\gamma \geq 0$, we have
  $
    \psi^{-1}(\gamma)
    =
    \sqrt{L_0 \gamma + \frac{1}{4} L_1^2 \gamma^2} + \frac{1}{2} L_1 \gamma
    \leq
    \sqrt{L_0 \gamma} + L_1 \gamma,
  $
  whence
  \[
    L_0 + L_1 \psi^{-1}(\gamma)
    \leq
    L_0 + L_1 (\sqrt{L_0 \gamma} + L_1 \gamma)
    =
    L_0 + L_1^2 \gamma + \sqrt{L_0 L_1^2 \gamma}
    \leq
    \frac{3}{2} (L_0 + L_1^2 \gamma).
  \]
  Thus,
  \[
    f_{k + N}
    \leq
    \frac{3 (L_0 + L_1^2 \frac{f_k - f_{k + N}}{N}) R^2}{2 N^2}
    =
    \frac{3 L_0 R^2}{2 N^2} + \frac{3 (L_1 R)^2}{2 N^3} (f_k - f_{k + N}),
  \]
  which proves the first part of the claim.

  Applying now \cref{th:acc-method:auxiliary-sequence}, we conclude that
  $f_k \leq \epsilon$ for a given $0 < \epsilon \leq f_0$ whenever
  \begin{align*}
    k
    &\geq
    \sqrt{\frac{4 \cdot \frac{3}{2} L_0 R^2}{\epsilon}}
    +
    \ceil[\big]{(2 \cdot \tfrac{3}{2} (L_1 R)^2 )^{1 / 3}}
    \ceil[\Big]{\log_2 \frac{2 f_0}{\epsilon}}
    \\
    &=
    \sqrt{\frac{6 L_0 R^2}{\epsilon}}
    +
    \ceil[\big]{3^{1 / 3} (L_1 R)^{2 / 3}}
    \ceil[\Big]{\log_2 \frac{2 f_0}{\epsilon}}.
    \qedhere
  \end{align*}
\end{proof}

\begin{lemma}
  \label{th:acc-method:auxiliary-sequence}
  Let $(f_k)_{k = 0}^\infty$ be a nonnegative sequence of reals such that,
  for any integer $k \geq 0$ and $N \geq 1$, it holds that
  \[
    f_{k + N} \leq \frac{\alpha}{N^2} + \frac{\beta}{N^3} f_k,
  \]
  where $\alpha, \beta \geq 0$ are certain constants.
  Then, $f_k \leq \epsilon$ for a given $0 < \epsilon \leq f_0$ whenever
  \[
    k
    \geq
    \sqrt{\frac{4 \alpha}{\epsilon}}
    +
    \ceil[\big]{(2 \beta)^{1 / 3}} \ceil[\Big]{\log_2 \frac{2 f_0}{\epsilon}}.
  \]
\end{lemma}

\begin{proof}
  We assume that $\beta > 0$ (otherwise the claim is trivial).
  Let $N_1 \defeq \ceil{(2 \beta)^{1 / 3}}$ ($\geq 1$).
  Then, for any $k \geq 0$, we have
  \[
    f_{k + N_1}
    \leq
    \frac{\alpha}{N_1^2} + \frac{\beta}{N_1^3} f_k
    \leq
    \Delta + \frac{1}{2} f_k,
  \]
  where
  $\Delta \defeq \frac{\alpha}{N_1^2} \leq \frac{\alpha}{(2 \beta)^{2 / 3}}$.
  Applying now \cref{th:geometric-sequence-with-additive-term} to the subsequence
  $(f_{N_1 t})_{t = 0}^\infty$, we obtain, for any $t \geq 0$, that
  \[
    f_{N_1 t} \leq 2 \Delta + \frac{1}{2^t} f_0.
  \]
  Hence, for any $t \geq 0$ and any $N \geq 1$, it holds that
  \[
    f_{N_1 t + N}
    \leq
    \frac{\alpha}{N^2}
    +
    \frac{\beta}{N^3} f_{N_1 t}
    \leq
    \frac{\alpha}{N^2}
    +
    \frac{2 \beta \Delta}{N^3}
    +
    \frac{\beta f_0}{N^3 2^t}
    \leq
    \frac{\alpha}{N^2} \Bigl( 1 + \frac{N_1}{N} \Bigr)
    +
    \frac{\beta f_0}{N^3 2^t}.
  \]
  Therefore, to ensure that $f_{N_1 t + N} \leq \epsilon$, it suffices to
  satisfy the following three inequalities:
  \[
    \frac{2 \alpha}{N^2} \leq \frac{\epsilon}{2},
    \qquad
    N \geq N_1,
    \qquad
    \frac{\beta f_0}{N^3 2^t} \leq \frac{\epsilon}{2}.
  \]
  Note that, for each $N \geq N_1$, we have
  $\frac{\beta}{N^3} \leq \frac{\beta}{N_1^3} \leq \frac{1}{2}$.
  Hence, to satisfy the above three inequalities, it suffices to ensure that
  \[
    N
    \geq
    \max\Bigl\{ \sqrt{\frac{4 \alpha}{\epsilon}}, N_1 \Bigr\}
    \eqdef
    N_2,
    \qquad
    t \geq T \defeq \ceil[\Big]{\log_2 \frac{f_0}{\epsilon}}.
  \]
  We have thus proved that $f_k \leq \epsilon$ whenever $k \geq N_1 T + N_2$.
  It remains to note that
  \[
    N_1 T + N_2
    \leq
    N_1 (T + 1) + \sqrt{\frac{4 \alpha}{\epsilon}}
    =
    \ceil[\big]{(2 \beta)^{1 / 3}} \ceil[\Big]{\log_2 \frac{2 f_0}{\epsilon}}
    +
    \sqrt{\frac{4 \alpha}{\epsilon}},
  \]
  where we have first estimated the maximum by the sum and then used the fact
  that
  $\ceil{\log_2 \tau} + 1 = \ceil{\log_2 \tau + 1} = \ceil{\log_2(2 \tau)}$
  for any $\tau \geq 1$.
\end{proof}

\begin{lemma}
  \label{th:geometric-sequence-with-additive-term}
  Let $(\gamma_k)_{k = 0}^\infty$ be a nonnegative real sequence such that,
  for any $k \geq 0$,
  \[
    \gamma_{k + 1} \leq \Delta + q \gamma_k,
  \]
  where $\Delta \geq 0$ and $q \in [0, 1)$ are certain
  constants.
  Then, for any $k \geq 1$, it holds that
  \[
    \gamma_k
    \leq
    \frac{1 - q^k}{1 - q} \Delta + q^k \gamma_0
    \leq
    \frac{\Delta}{1 - q} + q^k \gamma_0.
  \]
\end{lemma}

\begin{proof}
  We can assume that $q > 0$ since otherwise the claim is trivial.
  Dividing both sides of the inequality from the statement by $q^{k + 1}$,
  we obtain, for any $k \geq 0$,
  \[
    \frac{\gamma_{k + 1}}{q^{k + 1}}
    \leq
    \frac{\Delta}{q^{k + 1}} + \frac{\gamma_k}{q^k}.
  \]
  Summing up these inequalities, we get, for any $k \geq 1$,
  \[
    \frac{\gamma_k}{q^k}
    \leq
    \sum_{i = 0}^{k - 1} \frac{\Delta}{q^{i + 1}} + \frac{\gamma_0}{q^0}
    =
    \Delta \sum_{i = 1}^k \frac{1}{q^i} + \gamma_0
    =
    \frac{1}{q} \frac{\frac{1}{q^k} - 1}{\frac{1}{q} - 1} \Delta + \gamma_0
    =
    \frac{\frac{1}{q^k} - 1}{1 - q} \Delta + \gamma_0,
  \]
  and the claim follows.
\end{proof}

\section{Complexity of NAG}
\label{sec:complexity-of-nag}
Unfortunately, the NAG algorithm presented in~\parencite{DBLP:conf/nips/LiQTRJ23} is not scale-invariant and its complexity reported in \parencite[Theorem~4.4]{DBLP:conf/nips/LiQTRJ23} is not written explicitly.
To streamline the comparison of the complexity bound for NAG with those for other methods for minimizing an $(L_0, L_1)$-smooth function, we provide a simple fix making the algorithm scale-invariant and also rewrite the result of \cite[Theorem~4.4]{DBLP:conf/nips/LiQTRJ23} (assuming it is true) in an explicit form.

\begin{theorem}
    \label{th:complexity-of-nag}
    Consider problem~\eqref{eq-problem} with an $(L_0, L_1)$-smooth convex function~$f$ assuming $L_0 > 0$.
    Let NAG~\parencite{DBLP:conf/nips/LiQTRJ23} be applied to solving the rescaled version of this problem:
    \[
        \tilde{f}^*
        \defeq
        \min_{x \in \R^d} \Bigl\{ \tilde{f}(x) \defeq \frac{1}{L_0} f(x) \Bigr\},
    \]
    starting from a certain point $x_0 \in \R^d$.
    Then, for an appropriate choice of parameters, NAG finds a point~$\bar{x} \in \R^d$ such that $f(\bar{x}) - f^* \leq \epsilon$ for a given $\epsilon > 0$ after at most the following number of iterations / gradient-oracle queries:
    \[
        16 \Bigl( 128 L_1^2 R^2 + \frac{128 L_1^2 F_0}{L_0} + 1 \Bigr)
        \sqrt{\frac{F_0 + L_0 R^2}{\epsilon}},
    \]
    where $F_0 \defeq f(x_0) - f^*$, $R \defeq \| x_0 - x^* \|$ and $x^*$ is an arbitrary solution of our problem.
\end{theorem}

\begin{proof}
    By construction, $\tilde{f}$ is an $(\tilde{L}_0, \tilde{L}_1)$-smooth with $\tilde{L}_0 = 1$ and $\tilde{L}_1 = L_1$.
    In the terminology of \parencite{DBLP:conf/nips/LiQTRJ23}, this means that $\tilde{f}$ is $\ell$-smooth w.r.t.\ the function
    \[
        \ell(G) \defeq \tilde{L}_0 + \tilde{L}_1 G \equiv 1 + L_1 G.
    \]
    Theorem~4.4 from~\parencite{DBLP:conf/nips/LiQTRJ23} then tells us that the sequence of the iterates~$\{ x_t \}$ constructed by NAG satisfies
    \begin{equation}
        \label{eq:nag-rescaled-residual}
        \tilde{f}(x_t) - \tilde{f}^* \leq \frac{4 (\tilde{F}_0 + R^2)}{\eta t^2 + 4},
    \end{equation}
    where $\tilde{F}_0 \defeq \tilde{f}(x_0) - \tilde{f}^*$, $R \defeq \| x_0 - x^* \|$,
    and $\eta > 0$ is the stepsize parameter required to satisfy
    \begin{equation}
        \label{eq:nag-requirement-on-stepsize}
        \eta
        \leq
        \min\Bigl\{ \frac{1}{16 [\ell(2 G)]^2}, \frac{1}{2 \ell(2 G)} \Bigr\}
        \equiv
        \frac{1}{16 [\ell(2 G)]^2}
        \equiv
        \frac{1}{16 (1 + 2 L_1 G)^2},
    \end{equation}
    where $G$ is an arbitrary constant such that
    \begin{equation}
        \label{eq:nag-requirement-on-G}
        G
        \geq
        \max\bigl\{ 8 \sqrt{\ell(2 G) (\tilde{F}_0 + R^2)}, \tilde{g}_0 \bigr\}
        \equiv
        \max\bigl\{ 8 \sqrt{(1 + 2 L_1 G) (\tilde{F}_0 + R^2)}, \tilde{g}_0 \bigr\}.
    \end{equation}
    where $\tilde{g}_0 \defeq \| \nabla \tilde{f}(x_0) \|$.

    In terms of our original function~$f$, the guarantee~\eqref{eq:nag-rescaled-residual} reads
    \[
        f_t \defeq f(x_t) - f^* \leq \frac{4 (F_0 + L_0 R^2)}{\eta t^2 + 4}.
    \]
    To achieve the fastest possible convergence, we select the largest possible stepsize~$\eta$ which is, according to \cref{eq:nag-requirement-on-stepsize},
    \[
        \eta = \frac{1}{16 (1 + 2 L_1 G)^2}.
    \]
    Substituting this formula into the previous display and dropping the (useless for improving the convergence rate) constant~$4$ from the denominator, we obtain
    \[
        f_t
        \leq
        \frac{64 (1 + 2 L_1 G)^2 (F_0 + L_0 R^2)}{t^2}
        \leq
        \epsilon
    \]
    whenever
    \begin{equation}
        \label{eq:nag-preliminary-complexity-bound}
        t \geq 8 (1 + 2 L_1 G) \sqrt{\frac{F_0 + L_0 R^2}{\epsilon}} \eqdef t(\epsilon).
    \end{equation}
    The obtained $t(\epsilon)$ is exactly the iteration complexity of the algorithm for obtaining an $\epsilon$-approximate solution for the original problem, and is also its gradient oracle complexity since the method makes precisely one gradient-oracle query at each iteration.

    It remains to choose the smallest possible parameter~$G$ satisfying \cref{eq:nag-requirement-on-G}.
    We start with rewriting this inequality in terms of the original function:
    \[
        G
        \geq
        \max\Bigl\{
            8 \sqrt{(1 + 2 L_1 G) \Bigl(\frac{F_0}{L_0} + R^2 \Bigr)},
            \frac{g_0}{L_0}
        \Bigr\}
        \equiv
        \max\Bigl\{ \sqrt{(1 + 2 L_1 G) \Delta}, \frac{g_0}{L_0} \Bigr\}
    \]
    where $g_0 \defeq \| \nabla f(x_0) \|$ and $\Delta \defeq 64 (\frac{F_0}{L_0} + R^2)$.
    This inequality is equivalent to the system of two inequalities:
    \[
        G^2 \geq (1 + 2 L_1 G) \Delta,
        \qquad
        G \geq \frac{g_0}{L_0}.
    \]
    Rearranging, we see that the first inequality is equivalent to
    \[
        G \geq \sqrt{\Delta + L_1^2 \Delta^2} + L_1 \Delta \eqdef G_*
    \]
    Further, it turns out that $G_* \geq \frac{g_0}{L_0}$.
    Indeed, according to \cref{eq:SimpleLowerBoundOnBregmanDistance}, we have
    $F_0 \geq \frac{g_0^2}{2 L_0 + 3 L_1 g_0}$, meaning that
    $
        g_0
        \leq
        \sqrt{2 L_0 F_0 + \frac{9}{4} L_1^2 F_0^2} + \frac{3}{2} L_1 F_0
        \leq
        \sqrt{2 L_0 F_0} + 3 L_1 F_0
    $;
    on the other hand, estimating $\Delta \geq \frac{64 F_0}{L_0}$, we see that
    $L_0 (\sqrt{\Delta} + L_1 \Delta) \geq 8 \sqrt{L_0 F_0} + 64 L_1 F_0$.
    Thus, the smallest possible value of~$G$ satisfying the original requirement~\eqref{eq:nag-requirement-on-G} is in fact $G = G_*$.

    Choosing now $G = G_*$ and substituting the definition of~$\Delta$, we obtain
    \[
        1 + 2 L_1 G
        =
        \frac{G_*^2}{\Delta}
        \leq
        \frac{2 (\Delta + L_1^2 \Delta^2) + 2 L_1^2 \Delta^2}{\Delta}
        =
        2 (1 + 2 L_1^2 \Delta)
        =
        2 \Bigl( 1 + \frac{128 L_1^2 F_0}{L_0} + 128 L_1^2 R^2 \Bigr).
    \]
    Substituting this bound into \cref{eq:nag-preliminary-complexity-bound}, we obtain the claimed bound on~$t(\epsilon)$.
\end{proof}

\end{document}